\newcommand {\C}        {{\mathbb{C}}}
\newcommand {\R}        {{\mathbb{R}}}
 \newtheorem{theorem}{Theorem}[section]
 \newtheorem{lemma}[theorem]{Lemma}
 \newtheorem{corollary}[theorem]{Corollary}
 \newtheorem{definition}[theorem]{Definition}
\title{ Generalized Eigenvalue Problems with Specified Eigenvalues }
\author{ Daniel~Kressner\thanks{SB MATHICSE ANCHP, EPF Lausanne,
Station 8,	CH-1015 Lausanne, Switzerland
{\tt (daniel.kressner@epfl.ch)}.} \and
 Emre~Mengi\thanks{ Department of Mathematics, Ko\c{c} University,
Rumelifeneri Yolu, 34450 Sar{\i}yer-\.{I}stanbul, Turkey {\tt (emengi@ku.edu.tr)}. }\and
Ivica Naki\'c\thanks{Department of Mathematics, University of Zagreb, 
Bijeni\v{c}ka 30, 10000 Zagreb, Croatia {\tt (nakic@math.hr)}. } \and
Ninoslav Truhar\thanks{ Department of Mathematics, University of Osijek,
Trg Ljudevita Gaja 6, HR-31 000 Osijek, Croatia {\tt (ntruhar@mathos.hr)}. }
}
\begin{document}
\maketitle

\begin{abstract}
We consider the distance from a (square or rectangular) matrix pencil to the 
nearest matrix pencil in 2-norm that has a set of specified 
eigenvalues. We derive a singular value optimization characterization for this 
problem and illustrate its usefulness for two applications. First, the characterization yields a singular value formula for determining the nearest pencil 
whose eigenvalues lie in a specified region in the complex plane. For instance, this enables
the numerical computation of the nearest stable descriptor system in control theory.
Second, the characterization partially solves the problem posed in [Boutry et al. 2005] regarding the 
distance from a general rectangular pencil to the nearest pencil with a complete set of eigenvalues.
The involved singular value optimization problems are solved by means of BFGS and
Lipschitz-based global optimization algorithms. \\ \\

\textbf{Key words.} Matrix pencils, eigenvalues, optimization of singular values, inverse eigenvalue problems, 
Lipschitz continuity, Sylvester equation.

\textbf{AMS subject classifications.}  65F15, 65F18, 90C26, 90C56
\end{abstract}

\pagestyle{myheadings}
\thispagestyle{plain}
\markboth{D. KRESSNER, E. MENGI, I. NAKIC AND N. TRUHAR}{Generalized Eigenvalue Problems with Specified Eigenvalues}

\section{Introduction}
Consider a matrix pencil $A - \lambda B$ where $A, B \in \C^{n\times m}$ with $n \geq m$.
Then a scalar $\rho \in \C$ is called an \emph{eigenvalue} of the pencil if there 
exists a nonzero vector $v \in \C^n$ such that
\begin{equation}\label{eq:eig_defn}
	 ( A - \rho B ) v = 0.
\end{equation}
The vector $v$ is said to be a \emph{(right) eigenvector} associated with $\rho$
and the pair $(\rho, v)$ is said to be an \emph{eigenpair} of the pencil.

In the square case $m=n$, the eigenvalues are simply given by the roots
of the characteristic polynomial $\det(A-\lambda B)$ and 
there are usually $n$ eigenvalues, counting multiplicities.
The situation is quite the opposite for $n>m$.
Generically, a rectangular pencil $A - \lambda B$ has
no eigenvalues at all. To see this, notice that a necessary condition
for the satisfaction of~(\ref{eq:eig_defn}) is that $ n! / \left( (n-m)! m! \right)$ polynomials, each corresponding to the determinant of a pencil obtained by choosing $m$ 
rows of $A - \lambda B$ out of $n$ rows, must have a common root. 
Also, the generic Kronecker canonical form of a rectangular
matrix pencil only consists of singular blocks (see \cite{DemE95}).
Hence, (\ref{eq:eig_defn}) is an ill-posed problem and requires reformulation
before admitting numerical treatment.

To motivate our reformulation of~(\ref{eq:eig_defn}), we describe a typical situation giving rise to
rectangular matrix pencils. Let $M \in \mathbb C^{n\times n}$
and suppose that the columns of 
$U \in \mathbb C^{n\times m}$ form an orthonormal basis
for a subspace $\mathcal W \subset \mathbb C^n$ 
known to contain approximations to some eigenvectors
of $M$. Then it is quite natural
to consider the $n\times m$ matrix pencil
\begin{equation} \label{eq:rectpencilsubspace}
 A - \lambda B := M U - \lambda U.
\end{equation}
The approximations contained in $\mathcal W$ and the approximate eigenpairs of $A - \lambda B$ are closely connected
to each other.
In one direction, suppose that $(\rho, x)$ with $x \in \mathcal W$
satisfies \begin{equation} \label{eq:squarematrixperturbed}
(M+\Delta M - \rho I) x = 0
\end{equation}
for some (small) perturbation $\Delta M$. Then there is $v \in \mathbb C^{n}$
such that $x = Uv$. Moreover, we have
\begin{equation} \label{eq:rectpencilsubspaceperturbed}
(A+\Delta A - \rho B)v = 0
\end{equation}
with $\Delta A := \Delta M\cdot U$ satisfying
$\|\Delta A\|_2 \le \|\Delta M\|_2$.
In the other direction, the relation~(\ref{eq:rectpencilsubspaceperturbed}) with
an arbitrary $\Delta A$ implies~(\ref{eq:squarematrixperturbed})
with $\Delta M = \Delta A\cdot U^{\ast}$ satisfying $\|\Delta M\|_2 = \|\Delta A\|_2$.
Unless $M$ is normal, the first part of this equivalence between approximate eigenpairs of $M$ and $A - \lambda B$
does not hold when the latter is replaced by the more common
compression $U^{\ast} M U$. This observation has led to the use of rectangular matrix
pencils in, e.g., large-scale pseudospectra computation (see \cite{TohT96}) and Ritz
vector extraction (see \cite{JiaS01}).

This paper is concerned with determining 
the 2-norm distance from the pencil $A - \lambda B$ to the
nearest pencil $(A+\Delta A) - \lambda B$ with a subset of specified eigenvalues. To be
precise, let ${\mathbb S}= \{ \lambda_1, \dots, \lambda_k \}$ be a set
of  distinct complex numbers and let $r$ be a positive integer. Let
$m_j(A + \Delta A, B)$ denote the (possibly zero) algebraic multiplicity\footnote{For a rectangular matrix pencil,
the algebraic multiplicity of $\lambda_j$ is
defined as the sum of the sizes of associated regular Jordan blocks in the Kronecker canonical form, see also Section~\ref{sec:KCF}.
By definition, this number is zero if $\lambda_j$ is actually not an eigenvalue of the pencil.}
of $\lambda_j$ as an eigenvalue  of
$(A+\Delta A) - \lambda B$. Then we consider the distance
\begin{equation} \label{eq:taurs}
 	\tau_r({\mathbb S}) := 
			\inf
				\Big\{
					\| \Delta A \|_2 : \sum_{j=1}^k m_j(A + \Delta A, B) \geq r
				\Big\}.
\end{equation}
We allow $B$ to be rank-deficient. However, we require that ${\rm rank}(B) \geq r$. Otherwise,
if ${\rm rank}(B) < r$,
the pencil $(A+\Delta A) - \lambda B$ has fewer than $r$ finite eigenvalues for
all $\Delta A$ and consequently the distance $\tau_r({\mathbb S})$ is ill-posed.

For $k = r = 1$, it is relatively easy to see that
\[
\tau_1(\{\lambda_1\}) = \sigma_m(A - \lambda_1 B),
\]
where, here and in the following, $\sigma_k$ denotes the $k$th largest singular value of a matrix.
(The particular form of this problem with $k=r=1$, and when $A$ and $B$ are perturbed simultaneously,
is also studied for instance in \cite{Byers93}.) 
One of the main contributions of this paper is a derivation of a similar
singular value optimization characterization for general $k$ and $r$,
which facilitates the computation of $\tau_r({\mathbb S})$. Very little
seems to be known in this direction. Existing results
concern the square matrix case ($m=n$ and $B = I$); see
the works by \cite{Malyshev1999} for $k = 1$ and $r = 2$ as well as \cite{MR2156435}
for $k = 2$ and $r=2$, \cite{MR1991900} for $k = 1$ and $r = 3$, and
\cite{Mengi2009} for $k = 1$ and arbitrary $r$. Some attempts have also
been made by \cite{MR2592917} for arbitrary $k$ and $r$ and for the square
matrix case, and by \cite{MR2444335} for $k = 1$ and $r = 2$ and 
for the square matrix polynomial case. 
% Note that, as it is evident from the definition in 
% (\ref{eq:taurs}), we allow perturbations to $A$ only, and restrict perturbations to $B$ 
% throughout this paper.

Another class of applications arises in (robust) control theory, where a number of tasks 
 require the determination of a (minimal)
perturbation that moves some or all eigenvalues into a certain
region in the complex plane. With the region of interest
denoted by 
$\Omega \subseteq \mathbb{C}$, the results in this paper are an important
step towards rendering the numerical computation of 
the distance
\begin{eqnarray*}
	\tau_r(\Omega)& := &
			\inf
				\big\{
					\| \Delta A\|_2 : (A+\Delta A) - \lambda B \; \text{has $r$ finite eigenvalues in $\Omega$}
				\big\} \\
				&= &
			\inf_{{\mathbb S} \subseteq \Omega} \tau_r({\mathbb S})
\end{eqnarray*}
feasible.
Here and in the following, multiple eigenvalues are counted according to their algebraic multiplicities.
For $r = 1$ and $\Omega$ equal to $\mathbb C^+$ (right-half complex plane),
the quantity $\tau_1(\mathbb C^+)$ amounts to the
distance to instability, also called stability radius. In \cite{VanLoan1985},
a singular value characterization of $\tau_1(\mathbb C^+)$
was provided, forming the basis of a number of algorithms
for computing $\tau_1(\mathbb C^+)$, see, e.g.,~\cite{BoyB90,Bye88}.
In our more general setting, we can also address the converse question:
Given an unstable matrix pencil $A-\lambda B$, determine
the closest stable pencil. Notice that this problem is intrinsically harder than the distance 
to instability. For the distance to instability it suffices to perturb the system so that 
\emph{one of the eigenvalues} is in the undesired region. On the other hand to make an 
unstable system stable one needs to perturb the system so that \emph{all eigenvalues} lie in the 
region of stability.

An important special case, $\Omega = \mathbb C$ leads to
\begin{eqnarray*}
	\tau_r(\C) &:= &
		\inf \{ \| \Delta A \|_2 : (A+\Delta A) - \lambda B \;\; {\rm has} \; r \; {\rm finite} \; {\rm eigenvalues} \; \} \\
			&= &
		\inf_{{\mathbb S} \subseteq \C} \tau_r({\mathbb S}). 
\end{eqnarray*}
For $r = 1$ and particular choices of rectangular $A$ and $B$, the distance $\tau_1(\C)$
corresponds to the distance to uncontrollability for a matrix pair (see \cite{BurLO04,Eis84}).
For general $r$, a variant of this distance was suggested in \cite{Boutry2005} to solve an inverse signal
processing problem approximately. More specifically, this problem
is concerned with the identification of the shape of a region in the complex plane given the moments over the
region. If the region is assumed to be a polygon, then its vertices can be posed as the eigenvalues
of a rectangular pencil $A - \lambda B$, where $A$ and $B$ are not exact due to 
measurement errors, causing the pencil to have no eigenvalues (see \cite{Elad2004} for details). 
Then the authors attempt to locate nearby pencils with a complete set of eigenvalues.
In this work we allow perturbations to $A$ only, but not to $B$. This restriction is only justified
if the absolute value of $\lambda$ does not become too small.
We consider our results and
technique as significant steps towards the complete solution of the problem posed in \cite{Elad2004}.

The outline of this paper is as follows.
In the next section, we review the Kronecker
canonical form for the pencil $A - \lambda B$.
In \S\ref{sec:rank_char_pencils}, we derive a rank characterization 
for the condition $\sum_{j=1}^k m_j(A , B) \geq r$. This is a crucial prerequisite for
deriving the 
singular value characterizations of $\tau_r({\mathbb S})$ in \S\ref{sec:SVD_derivation}.  
We discuss several corollaries of the singular value characterizations for $\tau_r({\mathbb S})$, in particular
for $\tau_r(\Omega)$ and $\tau_r(\C)$, in \S\ref{sec:nearest_rect_pencil}.
The singular value characterizations are
deduced under certain mild multiplicity and linear independence assumptions. Although we expect these assumptions to be satisfied for
examples of practical interest, they may fail to hold as demonstrated by an academic example in \S\ref{sec:qualifications}. Interestingly, the
singular value characterization remains true for this example
despite the fact that our derivation no longer applies. 
Finally, a numerical approach to solving the involved singular value optimization problems is briefly outlined 
in \S\ref{sec:computation} and applied to a number of settings in 
\S\ref{sec:numerical_exp}.
The main point of the developed numerical method and the experiments is to demonstrate that 
the singular value characterizations facilitate the computation of 
$\tau_r({\mathbb S})$, $\tau_r(\Omega)$ and $\tau_r(\C)$. 
We do not claim that the method outlined here is as efficient as it
could be, neither do we claim that it is reliable. 
%\begin{remark}
%The singular value characterizations derived in this paper hold 
%\textbf{(i)} regardless of whether $B$ is full-rank or not, \textbf{(ii)} but as long as 
%${\rm rank}(B) \geq r$. In most parts of the derivation $B$ will be assumed full-rank, 
%but this assumption will be dropped in the end due to the continuity of the singular 
%values as well as the distances under consideration with respect to $B$. The assumption
%that ${\rm rank}(B) \geq r$ is essential, since otherwise the distance problem under 
%consideration is ill-posed.
%\end{remark}

\section{Kronecker Canonical Form}\label{sec:KCF}

Given a matrix pencil $A-\lambda B  \in \C^{n \times m}$, the Kronecker
canonical form (KCF), see~\cite{Gantmacher1959}, states the existence of invertible matrices $P \in \C^{n\times n}$ and $Q \in \C^{m\times m}$ such that the
transformed pencil $P(A-\lambda B)Q$ is block diagonal with each diagonal block taking the form
\[
	%(I_m,J_m(\lambda))\quad \text{or}\quad(J_m(0),I_m)\quad \text{or}\quad(F_m,G_m)\quad \text{or}\quad (F_m^T,G_m^T),
	J_p(\alpha) - \lambda I_p \quad \text{or} \quad I_p - \lambda J_p(0) \quad \text{or}\quad F_p - \lambda G_p \quad \text{or}\quad F_p^T - \lambda G_p^T,
\]
where
\begin{equation} \label{eq:blockdef}
	J_p(\alpha) =  \underbrace{\left[
\begin{array}{cccc}
 \alpha & 1 \\[-5pt]
	& \alpha & \ddots \\[-5pt]
	& & \ddots & 1 \\
	& & & \alpha
\end{array}\right]}_{p\times p},\ F_p = \underbrace{\left[
\begin{array}{cccc}
 1 & 0 \\
 & \ddots & \ddots \\
 & & 1 & 0
\end{array} \right]}_{p \times (p+1)},\ G_p = \underbrace{\left[
\begin{array}{cccc}
 0 & 1 \\
 & \ddots & \ddots \\
 & & 0 & 1
\end{array} \right]}_{p \times (p+1)}
\end{equation}
for some $\alpha \in \C$. \emph{Regular blocks} take the form $J_p(\alpha) - \lambda I_p$ or $I_p - \lambda J_p(0)$, with $p\ge 1$, corresponding to finite or infinite eigenvalues, respectively. The blocks $F_p - \lambda G_p$ and $F_p^T - \lambda G_p^T$ are called \emph{right and left singular blocks}, respectively,
with $p\ge 0$ corresponding to a so called Kronecker index.
%More specifically, blocks of the form $F_m - \lambda G_m$ correspond to right (column)
%minimal indices and blocks of the form $F_m^T - \lambda G_m^T$ correspond to to left (row) minimal indices.
%It is crucial to note that $F_m$ and $G_m$ are allowed to be $0\times 1$ matrices.
%A block is called {\em singular} if it takes the form $(F_m,G_m)$ or $(F_m^T,G_m^T)$, and {\em regular} otherwise.

In large parts of this paper, indeed until the main singular value optimization characterization, 
we will assume that $A-\lambda B$ has no right singular blocks $F_p - \lambda G_p$.
%Clearly, this assumption rules out the occurrence of 
%regular blocks of the type $I_p - \lambda J_p(0)$ corresponding to infinite eigenvalues and singular 
%blocks of the type $F_p - \lambda G_p$.
Eventually, we will remove this assumption
by treating the occurence of such blocks separately in Section~\ref{sec:mainresult}.

\section{Rank Characterization for Pencils with Specified Eigenvalues}\label{sec:rank_char_pencils}

% {\bf OLD:
% In this section we derive a rank characterization for the satisfaction of the condition
% \begin{equation}\label{eq:alg_mult_ineq}
% 	\sum_{j=1}^k m_j(A,B) \geq r.
% \end{equation}
% This characterization will be in terms of a matrix that depends linearly on $A,B$ and
% and the following result concerning the dimension of the null space for a generalized Sylvester 
% equation will play a central role. The result follows from the extension of classical results~\cite[Theorem 1, p. 219]{Gantmacher1959} 
% on null spaces of standard Sylvester equations to generalized Sylvester equations~\cite{Kosir96}. 
% }

In this section we derive a rank characterization for the satisfaction of the condition
\begin{equation}\label{eq:alg_mult_ineq}
	\sum_{j=1}^k m_j(A,B) \geq r,
\end{equation}
where $m_j(A,B)$ denotes the algebraic multiplicity of the eigenvalue $\lambda_j$.
%, as defined in the introduction.
%is the sum of the sizes
%of the Jordan blocks of the form (\ref{eq:KCF_Jordan}) with $\alpha = \lambda_j$ in the Kronecker canonical 
%form of $A - \lambda B$.
%As we shall see the rank characterization depends on the pencil 
%$A - \lambda B$ linearly.
The following classical result \cite[Theorem 1, p. 219]{Gantmacher1959} concerning the dimension 
of the solution space for a Sylvester equation will play a central role. 
\begin{theorem}\label{thm:Sylvester_matrix}
	Let $F \in \C^{m\times m}$ and $G \in \C^{r\times r}$. Then the dimension of the solution space for the
	Sylvester equation
	\[
		FX - XG = 0
	\]
	only depends on the Jordan canonical forms of the matrices $F$ and $G$.
	Specifically, suppose that $\mu_1, \dots, \mu_\ell$ are the common eigenvalues of $F$
	and $G$. Let $c_{j,1}, \dots, c_{j, \ell_j}$ and $p_{j,1}, \dots, p_{j,\tilde{\ell}_j}$ denote the sizes 
	of the Jordan blocks of $F$ and $G$ associated with the eigenvalue $\mu_j$, respectively.
	Then
	\[
		{\rm dim} \{ X \in \C^{m\times r} : FX - XG = 0 \}		
						=
			\sum_{j = 1}^\ell \sum_{i = 1}^{\ell_j} \sum_{q = 1}^{\tilde{\ell}_j}
					\min ( c_{j,i}, p_{j,q}	). 
	\]
\end{theorem}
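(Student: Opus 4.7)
The plan is to reduce the problem to a direct sum of Sylvester equations between individual pairs of Jordan blocks. First I would observe that if $F = S J_F S^{-1}$ and $G = T J_G T^{-1}$ are the Jordan decompositions, the map $X \mapsto S^{-1} X T$ is a linear isomorphism from $\{X : FX - XG = 0\}$ onto $\{Y : J_F Y - Y J_G = 0\}$. This already proves the qualitative statement that the dimension depends only on the Jordan forms of $F$ and $G$, and it reduces the quantitative claim to the case where $F$ and $G$ are in Jordan canonical form.

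With $F$ and $G$ block diagonal, I would partition the unknown $X$ conformally into blocks $X_{i,j}$ indexed by Jordan blocks of $F$ and $G$. The equation $FX - XG = 0$ then decouples into independent equations
\[
   J_{c}(\mu)\, X_{i,j} - X_{i,j}\, J_{p}(\nu) = 0
\]
for every pair of Jordan blocks (of sizes $c$ and $p$, with eigenvalues $\mu$ and $\nu$). The total solution dimension is the sum of the dimensions of these block problems.

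The core computation is the dimension for a single pair. When $\mu \neq \nu$, I would rewrite the equation as $N_c X - X N_p + (\mu - \nu) X = 0$ with $N_c, N_p$ the standard nilpotent Jordan blocks. The operator $\mathcal{M}(X) := N_c X - X N_p$ is itself nilpotent on matrix space, so $\mathcal{M} + (\mu - \nu) I$ is invertible and forces $X = 0$. When $\mu = \nu$, the equation becomes $N_c X = X N_p$. A direct index-chasing argument shows $(N_c X)_{i,j} = x_{i+1,j}$ (with $x_{c+1,j} := 0$) and $(X N_p)_{i,j} = x_{i,j-1}$ (with $x_{i,0} := 0$), so the recursion $x_{i+1,j} = x_{i,j-1}$ forces $X$ to be constant along each diagonal $\{i-j = d\}$, while the boundary conditions at the last row ($i=c$) and first column ($j=1$) annihilate all diagonals except a specific range. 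A short case split on whether $c \leq p$ or $c > p$ shows that exactly $\min(c,p)$ diagonals remain free, giving the dimension $\min(c,p)$ for that pair.

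Summing these contributions over all block pairs, only pairs with matching eigenvalue $\mu_j$ survive, and the inner double sum over $(i,q)$ produces $\sum_{i,q}\min(c_{j,i}, p_{j,q})$ as claimed. The main technical obstacle is the diagonal-counting argument in the case $\mu = \nu$: one must keep the index ranges consistent across both sub-cases $c \leq p$ and $c > p$ to see that the correct count is uniformly $\min(c,p)$, rather than something depending asymmetrically on the two block sizes.
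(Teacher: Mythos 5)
Your proof is correct and is essentially the classical argument: the paper gives no proof of this theorem at all, citing it directly to Gantmacher, and your route (reduction to Jordan form via $X\mapsto S^{-1}XT$, decoupling into independent block pairs, the nilpotency argument forcing $X=0$ when $\mu\neq\nu$, and the diagonal count giving $\min(c,p)$ when $\mu=\nu$) is precisely the standard proof of that classical result. The diagonal bookkeeping in the equal-eigenvalue case checks out: the boundary conditions kill exactly the diagonals $i-j=d$ with $\min(1,\,c-p+1)\le d\le c-1$, leaving the $\min(c,p)$ diagonals with $1-p\le d\le \min(0,\,c-p)$ free.
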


For our purposes, we need to extend the result of Theorem~\ref{thm:Sylvester_matrix}
to a generalized Sylvester equation of the form
\begin{equation}\label{eq:gen_Sylvester_pr}
	AX - BXC = 0,
\end{equation}
where $C$ is a matrix with the desired set of eigenvalues ${\mathbb S}$ and with correct algebraic multiplicities.
For this type of generalized Sylvester equation, the extension is straightforward.\footnote{\cite{Kosir96} provides an extension of Theorem~\ref{thm:Sylvester_matrix} to
 a more general setting.} To see this, let us partition the KCF
\begin{equation} \label{eq:kcfpart}
 		P(A - \lambda B) Q = {\rm diag}
					\left(
						A_F - \lambda I, I - \lambda A_I, A_S - \lambda B_S
					\right),
\end{equation}
such that
\begin{itemize}
	\item $A_F - \lambda I$ contains all regular blocks corresponding to finite eigenvalues;
%	the finite eigenvalues of $A - \lambda B$ are same as the eigenvalues of $A_F$ (with the same
%	algebraic and geometric multiplicities), which is in the block Jordan form 
\item $I - \lambda A_I$ contains all regular blocks corresponding to infinite eigenvalues;
	\item $A_S - \lambda B_S$ contains all left singular blocks of the form $F_p^T - \lambda G_p^T$.
\end{itemize}
As explained in Section~\ref{sec:KCF}, we exclude the occurence of right singular blocks for the moment.
%only these two types of blocks can occur when $B$
%has full column rank.
% (Above regular blocks corresponding to the infinite eigenvalues and singular blocks of the form $F_p - \lambda G_p$ 
% are missing due to the full-rank assumption on $B$.)
Note that the finite eigenvalues of $A - \lambda B$ are 
equal to the eigenvalues of $A_F$ with the same algebraic and geometric multiplicities.

Using~(\ref{eq:kcfpart}), $X$ is a solution of the generalized Sylvester equation (\ref{eq:gen_Sylvester_pr}) if and only if
\[
	 (PAQ) (Q^{-1} X) - (PB Q) (Q^{-1} X) C = 0 \;\;\;
	 		        \Longleftrightarrow \;\;\;
			       		{\rm diag}
						\left(
							A_F, I, A_S
						\right)  Y
									-
					{\rm diag}
						\left(
							I, A_I, B_S
						\right) Y C = 0
\]
where $Y = Q^{-1}X$. Consequently, the dimension of the solution space for~(\ref{eq:gen_Sylvester_pr}) is the sum of the solution space 
dimensions of the equations
\[
	A_F Y_1   -  Y_1 C   = 0 \;\;\;\; {\rm and} \;\;\;\; 
Y_2   - A_I Y_2 C   = 0
\;\;\;\; {\rm and} \;\;\;\; A_S Y_3 - B_S Y_3 C = 0. 
\]
Results by~\cite{DemE95} show that the last two equations only admit the trivial solutions $Y_2 = 0$ and $Y_3 = 0$.
To summarize: the solution spaces of the generalized Sylvester equation (\ref{eq:gen_Sylvester_pr}) 
and the (standard) Sylvester equation
\[
		A_F X - XC = 0
\]
have the same dimension. Applying Theorem \ref{thm:Sylvester_matrix} we therefore obtain the following result.
\begin{theorem}\label{thm:Sylvester}
	Let $A, B \in \C^{n\times m}$ with $n \geq m$ be such that the KCF of $A-\lambda B$ does not contain right singular blocks. Then the dimension of the solution space for the generalized Sylvester equation
	\[
		AX - BXC = 0
	\]
	only depends on the Kronecker canonical form of $A - \lambda B$ and the Jordan canonical form of  $C \in \C^{r\times r}$.
	Specifically suppose that $\mu_1, \dots, \mu_\ell$ are the common eigenvalues of $A - \lambda B$
	and $C$. Let $c_{j,1}, \dots, c_{j, \ell_j}$ and $p_{j,1}, \dots, p_{j,\tilde{\ell}_j}$ denote the sizes 
	of the Jordan blocks of $A - \lambda B$ and $C$ associated with the eigenvalue $\mu_j$, respectively.
	Then
	\[
		{\rm dim} \{ X \in \C^{m\times r} : AX - BXC = 0 \}		
						=
			\sum_{j = 1}^\ell \sum_{i = 1}^{\ell_j} \sum_{q = 1}^{\tilde{\ell}_j}
					\min ( c_{j,i}, p_{j,q}	). 
	\]
\end{theorem}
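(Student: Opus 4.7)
The plan is to follow the reduction already sketched in the paragraphs preceding the theorem: transform the equation to block-diagonal form via the KCF, observe that it decouples, and apply Theorem~\ref{thm:Sylvester_matrix} to the single block that carries the finite-eigenvalue information.

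First I would set $Y = Q^{-1}X$ and left-multiply $AX - BXC = 0$ by $P$ to obtain the equivalent system
\[
\text{diag}(A_F, I, A_S)\, Y \,-\, \text{diag}(I, A_I, B_S)\, Y C \,=\, 0.
\]
Partitioning $Y$ into row blocks $Y_1, Y_2, Y_3$ conformally with~(\ref{eq:kcfpart}) decouples this into the three independent equations $A_F Y_1 - Y_1 C = 0$, $Y_2 - A_I Y_2 C = 0$, and $A_S Y_3 - B_S Y_3 C = 0$. Since $X \mapsto Y$ is a linear bijection and the row-block partition induces a direct sum decomposition, the dimension of the original solution space equals the sum of the dimensions of these three solution spaces.

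Next I would dispatch the three blocks. Theorem~\ref{thm:Sylvester_matrix} applied to $A_F Y_1 - Y_1 C = 0$, with $F = A_F$ (already in Jordan canonical form by construction of the KCF) and $G = C$, produces exactly the double sum in the statement: the Jordan blocks of $A - \lambda B$ at its finite eigenvalues coincide with the Jordan blocks of $A_F$, and since $C$ has only finite eigenvalues the set of common eigenvalues is unchanged. The other two equations must be shown to admit only the zero solution. For the infinite block, nilpotency of $A_I$ lets me iterate $Y_2 = A_I Y_2 C$ to conclude $Y_2 = A_I^p Y_2 C^p = 0$ once $p$ exceeds the nilpotency index of $A_I$, independently of $C$. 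For the singular block I would invoke the Demmel--Edelman result cited in the text.

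I expect the main obstacle to be the rigorous treatment of the singular-block equation $A_S Y_3 - B_S Y_3 C = 0$. Unlike the infinite-eigenvalue case, there is no direct nilpotency argument; instead one must exploit the fact that each left singular block $F_p^T - \lambda G_p^T$ has no eigenvalues at all. A self-contained argument would decompose $C$ into its generalized eigenspaces, reduce column-by-column to the case where $C$ is a single Jordan block $J_q(\mu)$, and then shift by $\mu$ to reduce further to the nilpotent case, where a direct calculation on the explicit staircase shape of $F_p^T$ and $G_p^T$ in~(\ref{eq:blockdef}) forces $Y_3 = 0$. Once these zero-solution claims are established, adding the contributions of the three blocks yields the claimed dimension formula.
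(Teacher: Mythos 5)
Your proposal is correct and follows essentially the same route as the paper: transform via the KCF, decouple into the three block equations, apply Theorem~\ref{thm:Sylvester_matrix} to the finite-eigenvalue block, and show the infinite and left-singular blocks admit only the trivial solution. The only difference is that the paper simply cites \cite{DemE95} for the latter two claims, whereas you supply direct arguments; note that the left-singular-block computation is even easier than you anticipate, since $F_p^T Y - G_p^T Y C = 0$ forces the rows of $Y$ to vanish successively starting from the first, with no need to decompose $C$.
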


We now apply the result of Theorem~\ref{thm:Sylvester} to the generalized Sylvester equation 
\begin{equation}\label{eq:gen_Sylvester}
	AX - BXC(\mu,\Gamma) = 0,
\end{equation}
where $C(\mu,\Gamma)$ takes the form
\begin{equation} \label{eq:cmu}
 	C(\mu,\Gamma)
		=
		\left[
			\begin{array}{cccc}
				\mu_1 & -\gamma_{21} & \dots & -\gamma_{r1} \\
					0 &	\mu_2	     & \ddots & \vdots \\[-0.1cm]
					   &			     & \ddots & -\gamma_{r,r-1}		\\
					 0 &			     &		  & \mu_r \\		
			\end{array}
		\right],
\end{equation}
with
	\[
	  \mu 
			=
		\left[
			\begin{array}{cccc}
				\mu_1 	&	\mu_2	& 	\dots 	&	\mu_r
			\end{array}
		\right]^T \in {\mathbb S}^r, \quad 
	  \Gamma
			=
		\left[
			\begin{array}{cccc}
				\gamma_{21} & \gamma_{31} & \dots & \gamma_{r,r-1}
			\end{array}
		\right]^T \in \C^{r(r-1)/2}.	
	\]
As explained in the introduction, the set ${\mathbb S} = \{ \lambda_1,\dots, \lambda_k \}$ contains
the desired approximate eigenvalues. Suppose that $\lambda_j$ occurs $p_j$ times in $\mu$. Furthermore, as in Theorem \ref{thm:Sylvester},
denote the sizes of the Jordan blocks of $A - \lambda B$ and $C(\mu,\Gamma)$ associated with 
the scalar $\lambda_j$ by $c_{j,1}, \dots, c_{j, \ell_j}$ and $p_{j,1}, \dots, p_{j,\tilde{\ell}_j}$, respectively. Note that
$p_j = \sum_{q=1}^{\tilde{\ell}_j} p_{j,q}$. In fact, for generic values of $\Gamma$ the matrix 
$C(\mu,\Gamma)$ has at most one Jordan block of size $p_j$ associated with 
$\lambda_j$ for $j = 1, \dots, k$, see~\cite{DemE95}. In the following, we denote this set of generic values 
for $\Gamma$ by ${\mathcal G}(\mu)$. By definition, this set depends on $\mu$ but not on $A - \lambda B$.

First, suppose that inequality (\ref{eq:alg_mult_ineq}) holds. If we choose $\mu$ such that  $\sum_{j=1}^k p_j = r$
and $p_j \leq m_j(A,B) = \sum_{i = 1}^{\ell_j} c_{j,i}$, then Theorem \ref{thm:Sylvester} implies that 
the dimension of the solution space for the generalized Sylvester equation (\ref{eq:gen_Sylvester}) is 
\[
		\sum_{j = 1}^k \sum_{i = 1}^{\ell_j} \sum_{q = 1}^{\tilde{\ell}_j}
					\min ( c_{j,i}, p_{j,q}	)
						\geq
		\sum_{j = 1}^k \sum_{i = 1}^{\ell_j} 
					\min (  c_{j,i}, p_j  )
						\geq
		\sum_{j = 1}^k \min ( m_j(A,B), p_j )
						  =
		\sum_{j = 1}^k p_j = r.
\]
In other words, there exists a vector $\mu$ with components from $\mathbb{S}$
such that the dimension of the solution space of the Sylvester equation (\ref{eq:gen_Sylvester})
is at least $r$.

Now, on the contrary, suppose that inequality (\ref{eq:alg_mult_ineq}) does
not hold. Then for generic values $\Gamma \in {\mathcal G}(\mu)$, the solution space 
dimension of~(\ref{eq:gen_Sylvester}) is
\[
	\sum_{j = 1}^k \sum_{i = 1}^{\ell_j}   \min (  c_{j,i}, p_{j}  )					
						\leq
		\sum_{j = 1}^k \sum_{i = 1}^{\ell_j} 
							c_{j,i}
						=
		\sum_{j=1}^k m_j(A,B) < r.			
\]
In other words, no matter how $\mu$ is formed from $\mathbb{S}$,
the dimension is always less than $r$ for
$\Gamma \in {\mathcal G}(\mu)$. This shows the following result.
\begin{theorem}\label{thm:spec_eigvals_Syl}
Let $A, B \in \C^{n\times m}$ with $n \geq m$ be such that the KCF of $A-\lambda B$ does not contain right singular blocks.
Consider a set ${\mathbb S} = \{ \lambda_1, \dots, \lambda_k \}$ of distinct complex scalars, 
and a positive integer $r$. Then the following two statements are equivalent.
\begin{enumerate}
	\item[\bf (1)] $\sum_{j=1}^k m_j(A,B) \geq r$, where $m_j(A,B)$ is the algebraic multiplicity of $\lambda_j$
	as an eigenvalue of $A - \lambda B$.
	\item[\bf (2)] There exists $\mu \in {\mathbb S}^{r}$ such that 
	\[
		{\rm dim} \{ X \in \C^{m\times r} : AX - BXC(\mu,\Gamma) = 0 \} \ge r
	\]
for all $\Gamma \in {\mathcal G}(\mu)$, where $C(\mu,\Gamma)$ is defined as in~(\ref{eq:cmu}).
\end{enumerate}
\end{theorem}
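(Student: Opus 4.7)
The plan is to apply Theorem \ref{thm:Sylvester} to the specific generalized Sylvester equation $AX - BXC(\mu,\Gamma) = 0$, where $C(\mu,\Gamma)$ is given by \eqref{eq:cmu}. The key structural input, recalled from \cite{DemE95} in the surrounding discussion, is that for any $\Gamma \in \mathcal{G}(\mu)$ and for each $\lambda_j$ actually appearing in $\mu$, the matrix $C(\mu,\Gamma)$ has exactly one Jordan block, of size $p_j := \#\{i : \mu_i = \lambda_j\}$, associated with $\lambda_j$. Consequently the triple sum in Theorem \ref{thm:Sylvester} collapses: the inner sum over $q$ reduces to a single term with $p_{j,1} = p_j$, leaving the dimension of the solution space equal to $\sum_{j=1}^k \sum_{i=1}^{\ell_j} \min(c_{j,i}, p_j)$ for every $\Gamma \in \mathcal{G}(\mu)$. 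This single expression, constant on $\mathcal{G}(\mu)$, is what I would bound from below or above in the two directions.

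For (1) $\Rightarrow$ (2), I would choose nonnegative integers $p_1,\dots,p_k$ with $p_j \leq m_j(A,B)$ and $\sum_j p_j = r$; such a choice exists precisely by hypothesis (1). Let $\mu \in \mathbb{S}^r$ list $\lambda_j$ with multiplicity $p_j$. Then for any $\Gamma \in \mathcal{G}(\mu)$, the collapsed dimension formula together with the elementary inequality $\sum_{i=1}^{\ell_j} \min(c_{j,i}, p_j) \geq \min\bigl(\sum_i c_{j,i}, p_j\bigr) = \min(m_j(A,B), p_j) = p_j$ yields a lower bound of $\sum_j p_j = r$ on the dimension, establishing condition (2).

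For the converse, I would argue the contrapositive. Supposing $\sum_j m_j(A,B) < r$, for an arbitrary $\mu \in \mathbb{S}^r$ with multiplicity vector $(p_1,\dots,p_k)$ (so that $\sum_j p_j = r$) and any $\Gamma \in \mathcal{G}(\mu)$, the trivial upper bound $\min(c_{j,i}, p_j) \leq c_{j,i}$ gives a dimension of at most $\sum_j \sum_i c_{j,i} = \sum_j m_j(A,B) < r$. This forces condition (2) to fail for every choice of $\mu$.

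The main obstacle is conceptual rather than computational: one has to see that the set $\mathcal{G}(\mu)$ is the right genericity set to make condition (2) well-posed, and that Theorem \ref{thm:Sylvester} applies uniformly across $\mathcal{G}(\mu)$ because the Jordan structure of $C(\mu,\Gamma)$ is constant there, so the ``for all $\Gamma \in \mathcal{G}(\mu)$'' quantifier costs nothing. A minor bookkeeping point is to allow $\lambda_j \in \mathbb{S}$ with $m_j(A,B) = 0$ (so $\ell_j = 0$ and the inner sum is empty) or $p_j = 0$ (so $\lambda_j$ contributes nothing); both cases are consistent with the formulas above, and once they are handled no further nontrivial work is needed.
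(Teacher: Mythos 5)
Your proposal is correct and follows essentially the same route as the paper: apply Theorem~\ref{thm:Sylvester} to $AX - BXC(\mu,\Gamma)=0$, use the genericity of $\Gamma\in\mathcal{G}(\mu)$ to reduce the triple sum to $\sum_j\sum_i\min(c_{j,i},p_j)$, bound it below by $\sum_j p_j = r$ when $p_j\le m_j(A,B)$ can be arranged, and above by $\sum_j m_j(A,B)<r$ for the contrapositive. The only cosmetic difference is that the paper keeps the first direction as an inequality chain valid for all $\Gamma$ rather than invoking the collapsed formula there, which changes nothing of substance.
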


\noindent To obtain a matrix formulation of Theorem~\ref{thm:spec_eigvals_Syl},
we use the Kronecker product $\otimes$ to vectorize the generalized Sylvester 
equation~(\ref{eq:gen_Sylvester}) and obtain
\[
	\left(
		((I \otimes A) - (C^T(\mu,\Gamma) \otimes B)
	\right)
		{\rm vec}(X)   =  {\mathcal L}(\mu,\Gamma,A,B) {\rm vec}(X) = 0,
\] 
with the lower block triangular matrix
\begin{equation}\label{eq:linear_pencil}
			{\mathcal L}(\mu,\Gamma,A,B) 
			:= 
		\left[
			\begin{array}{ccccc}
				A-\mu_1 B & & & &  \\
				 \gamma_{21} B & A-\mu_2 B & & &  \\
				     \vdots & \ddots & \ddots & & \\
				      \vdots & & \ddots & A-\mu_{r-1} B &  \\
				  \gamma_{r 1} B & \gamma_{r 2} B & \cdots & \gamma_{r, r -1} B & A-\mu_r B \\
			\end{array}
		\right].
\end{equation}
The operator ${\rm vec}$ stacks the columns of a matrix into one long vector.
% Also above $\otimes$ denotes the Kronecker product and for 
% $X = 
% 	\left[
% 		\begin{array}{cccc}
% 			X_1 & X_2 & \dots & X_r
% 		\end{array}
% 	\right] \in {\mathbb C}^{m\times r}
% $
% with $X_1, \dots, X_r \in {\mathbb C}^m$ denoting the columns of $X$
% we use the notation
% \[
% 	{\rm vec}(X) = 
% 			\left[
% 				\begin{array}{c}
% 					X_1 \\
% 					X_2 \\
% 					\vdots \\
% 					X_r \\
% 				\end{array}
% 			\right] \in {\mathbb C}^{mr}.
% \]
Clearly, the solution space of the generalized Sylvester equation and the null space of ${\mathcal L}(\mu,\Gamma,A,B)$ have the same 
dimension. Consequently, Theorem \ref{thm:spec_eigvals_Syl} can be
rephrased as follows. % yielding the desired rank characterization for the
%pencil $A - \lambda B$ to have the specified eigenvalues.
\begin{corollary}\label{thm:spec_eigvals_rankl}
Under the assumptions of Theorem~\ref{thm:spec_eigvals_Syl}, the following two statements are equivalent.
\begin{enumerate}
	\item[\bf (1)] $\sum_{j=1}^k m_j(A,B) \geq r$.
% where $m_j(A,B)$ is the algebraic multiplicity of $\lambda_j$	as an eigenvalue of $A - \lambda B$ .
	\item[\bf (2)] There exists $\mu \in {\mathbb S}^{r}$ such that
	$
		{\rm rank}\left(  {\mathcal L}(\mu,\Gamma,A,B)	\right)   \leq	mr - r
	$ for all $\Gamma \in {\mathcal G}(\mu)$.
\end{enumerate}
\end{corollary}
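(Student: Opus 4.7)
The plan is to derive the corollary directly from Theorem~\ref{thm:spec_eigvals_Syl} by recasting the solution-space dimension condition in (2) of that theorem as a rank condition on the Kronecker-product matrix $\mathcal{L}(\mu,\Gamma,A,B)$. The only substantive work is a vectorization identity that the paper has essentially already carried out in the paragraph preceding the corollary; my job is to spell out why this converts a nullity condition into exactly the claimed rank bound.

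First I would invoke the standard Kronecker-product identity $\vect(PXQ) = (Q^T \otimes P)\vect(X)$, which, applied termwise to $AX - BX C(\mu,\Gamma) = 0$, yields
\[
\bigl( I_r \otimes A - C(\mu,\Gamma)^T \otimes B \bigr)\vect(X) = 0.
\]
Since $A,B \in \C^{n\times m}$ and $C(\mu,\Gamma) \in \C^{r\times r}$, the coefficient matrix is of size $nr \times mr$, acting on vectors in $\C^{mr}$. Next I would verify that this coefficient matrix is exactly $\mathcal{L}(\mu,\Gamma,A,B)$ as displayed in~(\ref{eq:linear_pencil}): the diagonal blocks come from $I_r \otimes A$ shifted by the diagonal entries $\mu_j$ of $C(\mu,\Gamma)^T$, contributing $A - \mu_j B$, while the strictly lower triangular entries $-\gamma_{ji}$ of $C(\mu,\Gamma)$ sit in the strictly lower triangular block positions of $C(\mu,\Gamma)^T \otimes B$ with signs flipped, contributing the off-diagonal $\gamma_{ji} B$ blocks. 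This is a routine but necessary bookkeeping check, and it is the only computational step in the proof.

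Once this identification is made, the linear map $X \mapsto AX - BX C(\mu,\Gamma)$ and the linear map $\vect(X) \mapsto \mathcal{L}(\mu,\Gamma,A,B)\,\vect(X)$ have kernels of equal dimension, because $\vect$ is a linear isomorphism $\C^{m\times r} \to \C^{mr}$. By the rank-nullity theorem applied to $\mathcal{L}(\mu,\Gamma,A,B)$ as a map $\C^{mr} \to \C^{nr}$,
\[
\dim\{X \in \C^{m\times r} : AX - BX C(\mu,\Gamma) = 0\} = mr - \mathrm{rank}\bigl(\mathcal{L}(\mu,\Gamma,A,B)\bigr).
\]
Hence the inequality $\dim\{X : AX - BXC(\mu,\Gamma) = 0\} \ge r$ is equivalent to $\mathrm{rank}\bigl(\mathcal{L}(\mu,\Gamma,A,B)\bigr) \le mr - r$.

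Finally, I would substitute this equivalence into the two conditions of Theorem~\ref{thm:spec_eigvals_Syl}: the universally quantified statement ``for all $\Gamma \in \mathcal{G}(\mu)$, the solution space has dimension at least $r$'' translates verbatim into ``for all $\Gamma \in \mathcal{G}(\mu)$, $\mathrm{rank}(\mathcal{L}(\mu,\Gamma,A,B)) \le mr - r$,'' while the existence of $\mu \in \mathbb{S}^r$ is preserved unchanged. This yields the stated equivalence, completing the proof. There is no real obstacle here; the result is a direct reformulation, and the only place one could slip is in matching signs and positions of the off-diagonal blocks when comparing $C(\mu,\Gamma)^T \otimes B$ with the explicit form~(\ref{eq:linear_pencil}).
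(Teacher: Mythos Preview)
Your proposal is correct and follows essentially the same approach as the paper: the paper vectorizes the generalized Sylvester equation to identify its solution space with the null space of ${\mathcal L}(\mu,\Gamma,A,B)$, and then simply states that Theorem~\ref{thm:spec_eigvals_Syl} can be rephrased accordingly. You have spelled out the rank-nullity step and the block-by-block identification more explicitly, but the argument is the same.
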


\section{A singular value characterization for the nearest pencil with specified eigenvalues}\label{sec:SVD_derivation}

As before, let ${\mathbb S} = \{ \lambda_1, \dots, \lambda_k \}$ be a set
of distinct complex scalars and let $r$ be a positive
integer. The purpose of this section is to derive a singular value optimization characterization for 
the distance $\tau_r({\mathbb S})$ defined in~(\ref{eq:taurs}). Our technique is highly inspired by the techniques in 
\cite{Mengi2009, Mengi2010} and in fact the main result of this section generalizes the singular value optimization 
characterizations from these works. We start by applying the following 
elementary result \cite[Theorem 2.5.3, p.72]{Golub1996} to the rank characterization derived in the 
previous section.
\begin{lemma} \label{thm:nearest_rankr}
Consider $C \in \C^{\ell\times q}$ and a positive integer $p < \min (\ell,q)$. Then
\[
	\inf \big\{\| \Delta C \|_2 : {\rm rank}(C+\Delta C) \leq p \big\} = \sigma_{p+1}(C).
\]
\end{lemma}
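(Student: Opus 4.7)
The plan is to prove this standard Eckart--Young--Mirsky style result by establishing matching upper and lower bounds based on the singular value decomposition of $C$. Write $C = U \Sigma V^{\ast}$ with $\Sigma = \mathrm{diag}(\sigma_1, \sigma_2, \dots, \sigma_{\min(\ell,q)})$ and $\sigma_1 \geq \sigma_2 \geq \cdots \geq 0$.

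For the upper bound, I would exhibit a specific low-rank perturbation attaining the value $\sigma_{p+1}(C)$. The natural choice is the truncated SVD perturbation
\[
\Delta C := - \sum_{i = p+1}^{\min(\ell,q)} \sigma_i\, u_i v_i^{\ast},
\]
where $u_i$ and $v_i$ denote the $i$th columns of $U$ and $V$. By orthonormality of the $u_i$ and $v_i$, the matrix $C + \Delta C = \sum_{i=1}^{p} \sigma_i u_i v_i^{\ast}$ has rank at most $p$, while $\|\Delta C\|_2 = \sigma_{p+1}(C)$.

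For the lower bound, I would take an arbitrary $\Delta C$ with $\mathrm{rank}(C + \Delta C) \leq p$ and argue via a dimension count. The null space $\mathcal{N}(C + \Delta C)$ has dimension at least $q - p$, while the subspace $\mathcal{V}_{p+1} := \mathrm{span}(v_1, \dots, v_{p+1})$ has dimension $p+1$. Since these are subspaces of $\mathbb{C}^{q}$, their dimensions sum to at least $q + 1$, so there exists a unit vector $x \in \mathcal{N}(C + \Delta C) \cap \mathcal{V}_{p+1}$. Writing $x = \sum_{i=1}^{p+1} c_i v_i$ with $\sum |c_i|^2 = 1$, the relation $\Delta C\, x = - C x$ yields
\[
\|\Delta C\|_2^2 \;\geq\; \|\Delta C\, x\|_2^2 \;=\; \|C x\|_2^2 \;=\; \sum_{i=1}^{p+1} \sigma_i^2 |c_i|^2 \;\geq\; \sigma_{p+1}^2,
\]
where the last inequality uses $\sigma_i \geq \sigma_{p+1}$ for $i \leq p+1$. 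Combining the two bounds gives equality.

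The one subtle point is that the infimum is actually attained (by the truncated SVD construction), so it could equivalently be written as a minimum; this will likely be useful later in Section 4 when the lemma is applied to the rank characterization of $\mathcal{L}(\mu,\Gamma,A,B)$ from Corollary~\ref{thm:spec_eigvals_rankl}. Since the result is entirely classical, no genuine obstacle is expected — the only care needed is the dimension-counting step for the lower bound, which implicitly assumes $p+1 \leq \min(\ell,q)$, a condition that is guaranteed by the hypothesis $p < \min(\ell,q)$.
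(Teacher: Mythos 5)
Your proof is correct and is exactly the standard Eckart--Young--Mirsky argument for the spectral norm (truncated SVD for the upper bound; intersecting the null space of $C+\Delta C$ with ${\rm span}(v_1,\dots,v_{p+1})$ for the lower bound); the paper gives no proof of its own, simply citing Golub--Van Loan, whose proof is the same. Your observation that the infimum is attained, so the hypothesis $p<\min(\ell,q)$ is precisely what makes $\sigma_{p+1}$ and $v_{p+1}$ well defined, is also accurate.
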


\noindent Defining
\begin{equation}\label{eq:defn_P}
	{\mathcal P}_r(\mu) :=
		\inf 
		\big\{ 
			\| \Delta A \|_2 :
				 {\rm rank} \left( {\mathcal L}(\mu, \Gamma, A+\Delta A, B) \right) \leq mr - r
		\big\}
\end{equation}
for some $\Gamma \in {\mathcal G}(\mu)$,
Corollary~\ref{thm:spec_eigvals_rankl} implies
\[
	\tau_r({\mathbb S}) = \inf_{\mu \in {\mathbb S}^r } {\mathcal P}_r(\mu),
\]
independent of the choice of $\Gamma$.
By Lemma~\ref{thm:nearest_rankr}, it holds that
\begin{eqnarray*}
	{\mathcal P}_r(\mu) 
				&= &
			\inf 
			\{ 
				\| \Delta A \|_2 : 
					 {\rm rank} \left( {\mathcal L}(\mu, \Gamma, A+\Delta A, B) \right) \leq mr - r
			\} \\
				&\geq &
		\sigma_{mr-r+1}  \left(  {\mathcal L}(\mu, \Gamma, A, B)   \right),
\end{eqnarray*}
using the fact that $A$ enters ${\mathcal L}$ linearly.
Note that this inequality in general is \emph{not} an equality
due to the fact that the allowable perturbations to ${\mathcal L}(\mu,\Gamma,A,B)$ in the definition of ${\mathcal P}_r(\mu)$  
are not arbitrary. On the other hand, the inequality holds for all $\Gamma \in {\mathcal G}(\mu)$ and hence -- by 
continuity of the singular value $\sigma_{mr-r+1}(\cdot )$ with respect to $\Gamma$ -- we obtain the lower bound
\begin{equation} \label{eq:lowerbound}
	{\mathcal P}_r(\mu) \;\;  \geq \;\; \sup_{\Gamma \in \C^{r(r-1)/2}} \sigma_{mr-r+1}  \left(  {\mathcal L}(\mu, \Gamma, A, B)   \right) =: \kappa_r(\mu).
\end{equation}
For $m = n$, it can be shown that $\sigma_{mr-r+1}  \left(  {\mathcal L}(\mu,\Gamma,A,B)  \right)$ tends to zero as
$\|\Gamma\|:=\sum |\gamma_{ij}|^2\to\infty$ provided that ${\rm rank}(B) \geq r$; see Appendix \ref{sec:to_zero}
for details. From this fact and the continuity of singular values, it
follows that the supremum is attained at some $\Gamma_{\ast}$ in the square case:
\[
	\kappa_r(\mu) = \sigma_{mr-r+1}  \left(  {\mathcal L}(\mu,\Gamma_{\ast},A,B)  \right).
\]
In the rectangular case, numerical experiments indicate that the supremum is still attained
if ${\rm rank}(B) \geq r$, but
a formal proof does not appear to be easy.
Moreover, it is not clear whether the supremum is attained at a unique $\Gamma_\ast$ or not.
However, as we will show in the subsequent two subsections, any local extremum of the singular value 
function is a global maximizer under mild assumptions. (To be precise, the satisfaction of the multiplicity 
and linear independence qualifications at a local extremum guarantees that the local 
extremum is a global maximizer;  see Definitions \ref{def:mult} and \ref{def:linin} below for multiplicity and 
linear independence qualifications.)

Throughout the rest of this section we assume that the supremum is attained at some $\Gamma_{\ast}$ and that $\Gamma_{\ast} \in
{\mathcal G}(\mu)$. The latter assumption will be removed later, in Section~\ref{sec:mainresult}.

We will establish 
the reverse inequality ${\mathcal P}_r(\mu) \leq \kappa_r(\mu)$ by constructing an optimal perturbation 
$\Delta A_{\ast}$ such that
\begin{enumerate}
	\item[\bf (i)] $\| \Delta A_{\ast} \|_2 = \kappa_r(\mu)$, $\;\;$ and
	\item[\bf (ii)] ${\rm rank} \left(  {\mathcal L}(\mu,\Gamma_{\ast},A+\Delta A_{\ast},B)  \right) \leq mr - r$.
\end{enumerate}
Let us consider the
left and right singular vectors $U\in\C^{rn}$ and $V\in\C^{rm}$ satisfying
the relations
\begin{equation}\label{eq:opt_sval_svecs}
	{\mathcal L}(\mu,\Gamma_{\ast},A,B) \; V = \kappa_r(\mu) \; U, \qquad
	U^{\ast} \; {\mathcal L}(\mu,\Gamma_{\ast},A,B) = V^{\ast} \; \kappa_r(\mu),\qquad 
\|U\|_2 = \|V\|_2 = 1.
\end{equation}
The aim of the next two subsections is to show that the perturbation
\begin{equation}\label{eq:optimal_perturbation}
	\Delta A_{\ast} := -\kappa_r(\mu)\, {\mathcal U} {\mathcal V}^+
\end{equation}
with ${\mathcal U} \in \C^{n\times r}$ and ${\mathcal V} \in \C^{m\times r}$ such that ${\rm vec}({\mathcal U}) = U$
and ${\rm vec}({\mathcal V}) = V$ satisfies properties \textbf{(i)} and \textbf{(ii)}. Here, ${\mathcal V}^{+}$ denotes 
the Moore-Penrose pseudoinverse of ${\mathcal V}$. The optimality of $\Delta A_{\ast}$
will be established under the following additional assumptions.
\begin{definition}[Multiplicity Qualification]\label{def:mult}
We say that the multiplicity qualification holds at $\left( \mu, \Gamma \right)$
for the pencil $A - \lambda B$ if the multiplicity of the singular value 
$\sigma_{mr-r+1}  \left(  {\mathcal L}(\mu,\Gamma,A,B)  \right)$ is one.
\end{definition} 
% DK:
% I have thought a bit whether the multiplication
% qualification can be dropped. On the one hand, even for
% multiple singular values, there is probably
% a choice of singular vectors (from the subgradient) that
% will do the job. On the other hand, exactly on this choice
% the Linear Independence Qualification needs to be imposed,
% making this qualification much more awkward to formulate.
% At the moment, I therefore suggest to leave it like this
% but maybe for the revision this could be reconsidered,
% possibly there are some results in this direction from
% newer work on the Wilkinson distance problem.
\begin{definition}[Linear Independence Qualification] \label{def:linin}
We say that the linear independence qualification holds at $\left( \mu, \Gamma \right)$
for the pencil $A - \lambda B$ if there
is a right singular vector $V$ associated with
$\sigma_{mr-r+1}  \left(  {\mathcal L}(\mu,\Gamma,A,B)  \right)$
such that $\mathcal V \in \C^{m\times r}$, with 
${\rm vec}(\mathcal V) = {V}$, has
full column rank.
\end{definition}

 \subsection{The 2-norm of the optimal perturbation} \label{sec:2norm}
Throughout this section we assume that the multiplicity qualification 
holds at the optimal $(\mu, \Gamma_{\ast})$ for the pencil $A - \lambda B$.
Moreover, we can restrict ourselves to the case $\kappa_r(\mu) \not=0$, as
the optimal perturbation is trivially given by $\Delta A_\ast = 0$
when
$\kappa_r(\mu) =0$ .

Let ${\mathcal A}(\gamma)$ be a matrix-valued function depending analytically
on a parameter $\gamma \in \R$. If the multiplicity
of $\sigma_j \left( {\mathcal A}(\gamma_\ast) \right)$ is one
and $\sigma_j \left( {\mathcal A}(\gamma_\ast) \right)\not=0$,
then $\sigma_j \left( {\mathcal A}(\gamma) \right)$ is analytic at
$\gamma = \gamma_\ast$, with the derivative
\begin{equation} \label{eq:derivative}
 	\frac{\partial \sigma_j\left( {\mathcal A}(\gamma_\ast) \right)}{\partial \gamma}
		=
	{\rm Re} \left( u_j^{\ast} \frac{\partial {\mathcal A}(\gamma_\ast)}{\partial \gamma} v_j \right),
\end{equation}
where $u_j$ and $v_j$ denote a consistent pair of unit left and right singular vectors associated
with $\sigma_j \left( {\mathcal A}(\gamma_\ast) \right)$, see, e.g.,~\cite{Bunse1991,Malyshev1999,Rel36}. 
%Note that, in order to hold for zero $\sigma_j \left( {\mathcal A}(\gamma_\ast) \right)$, this result result
%requires that
%$\sigma_j \left( {\mathcal A}(\gamma) \right)$ may become negative.

Let us now define
\[
	f(\Gamma) := \sigma_{nr-r+1}  \big(  {\mathcal L}(\mu,\Gamma,A,B)  \big),
\]  
where we view $f$ as a mapping $\R^{r(r-1)} \rightarrow \R$ by decomposing each complex parameter $\gamma_{j\ell}$
contained in $\Gamma$ into its real and imaginary parts
$\Re \gamma_{j\ell}$ and $\Im \gamma_{j\ell}$.
% ??? NEEDED ???
% Now, when the
% full Jordan block qualification holds at the optimal $(\Lambda, \Gamma_{\ast})$, clearly
% \[
% 		f(\Gamma_{\ast}) = 0 \;\; \Longleftrightarrow \;\;
% 	\sup_{\Gamma} \sigma_{mr-r+1}  \left(  {\mathcal L}(\mu,\Gamma,A,B)  \right) = 0
% 					\;\; \Longleftrightarrow \;\;
% 		{\mathcal P}_r(\mu) = 0.
% \]
% Thus $\kappa_r(\mu) = {\mathcal P}_r(\mu)$ when $f(\Gamma_{\ast}) = 0$, so 
% without loss of generality we assume $f(\Gamma_{\ast}) \neq 0$.
% ??? NEEDED ???
By~(\ref{eq:derivative}), we have
\[
	\frac{ \partial f(\Gamma_{\ast}) }{\partial \Re \gamma_{j\ell}}
			=
	  {\rm Re} \big( U_j^{\ast} B V_\ell \big), \qquad
	  \frac{ \partial f(\Gamma_{\ast} ) }{\partial \Im \gamma_{j\ell}}
			=
	  {\rm Re} \big( \mathrm{i}\, U_j^{\ast} B V_\ell \big) = - 
{\rm Im} \big(U_j^{\ast} B V_\ell \big),
\]
where $U_j \in \C^n$ and $V_\ell \in \C^m$ denote the $j$th and $\ell$th block
components of $U$ and $V$, respectively.
Furthermore, the fact that $\Gamma_{\ast}$ is a global maximizer of $f$ implies
that both derivatives are zero. Consequently we obtain the following
result.
\begin{lemma}\label{thm:opt_svecs}
Suppose that the multiplicity qualification holds 
at $(\mu,\Gamma_{\ast})$ for the pencil $A - \lambda B$
 and $\kappa_r(\mu) \not=0$.
%$f(\Gamma_{\ast}) \neq 0$.
Then
$
	U_j^{\ast} B V_\ell = 0
$
for all $j = 2,\dots,r$ and $\ell = 1,\dots,j-1$.
\end{lemma}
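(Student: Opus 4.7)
The proof plan is essentially a first-order optimality argument. Since the multiplicity qualification holds at $(\mu,\Gamma_{\ast})$ and $\kappa_r(\mu)\neq 0$, the singular value function $\sigma_{mr-r+1}\bigl(\mathcal{L}(\mu,\Gamma,A,B)\bigr)$ is analytic in $\Gamma$ in a neighborhood of $\Gamma_\ast$, by the analyticity result quoted just before the lemma. Viewing $f$ as a real-valued function of the real and imaginary parts of the entries of $\Gamma$, the assumption that $\Gamma_\ast$ is a global maximizer forces $\nabla f(\Gamma_\ast)=0$.

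The main computation is just to write down this gradient. Inspecting the block form~(\ref{eq:linear_pencil}), only the $(j,\ell)$ block of $\mathcal{L}$ depends on $\gamma_{j\ell}$ (for $j>\ell$), and the dependence there is linear through the term $\gamma_{j\ell}B$. Hence
\[
\frac{\partial \mathcal{L}(\mu,\Gamma,A,B)}{\partial \gamma_{j\ell}}
\]
is the block matrix whose $(j,\ell)$ block equals $B$ and all of whose other blocks vanish. Substituting into the derivative formula~(\ref{eq:derivative}) and using the block partitions $U=(U_1,\dots,U_r)$ and $V=(V_1,\dots,V_r)$ of the singular vectors in~(\ref{eq:opt_sval_svecs}), one obtains
\[
 \frac{\partial f(\Gamma_\ast)}{\partial \Re\gamma_{j\ell}}=\mathrm{Re}\bigl(U_j^{\ast}BV_\ell\bigr), \qquad
 \frac{\partial f(\Gamma_\ast)}{\partial \Im\gamma_{j\ell}}=-\mathrm{Im}\bigl(U_j^{\ast}BV_\ell\bigr),
\]
exactly as displayed in the text preceding the lemma.

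Finally, setting both partials to zero for every pair $(j,\ell)$ with $j>\ell$ gives $\mathrm{Re}(U_j^\ast B V_\ell)=\mathrm{Im}(U_j^\ast B V_\ell)=0$, hence $U_j^\ast B V_\ell=0$ for all $j=2,\dots,r$ and $\ell=1,\dots,j-1$, which is the claim. The only potential obstacle is justifying differentiability of $f$ at $\Gamma_\ast$, but this is precisely what the multiplicity qualification combined with $\kappa_r(\mu)\neq 0$ guarantees via the quoted analytic perturbation result; no further technical work is needed.
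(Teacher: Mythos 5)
Your proposal is correct and follows essentially the same route as the paper: apply the analytic perturbation formula~(\ref{eq:derivative}) to $f(\Gamma)=\sigma_{mr-r+1}\bigl(\mathcal{L}(\mu,\Gamma,A,B)\bigr)$, note that $\partial\mathcal{L}/\partial\gamma_{j\ell}$ has $B$ in the $(j,\ell)$ block and zeros elsewhere, and invoke first-order optimality at the (interior, since $\Gamma$ ranges over all of $\C^{r(r-1)/2}$) maximizer $\Gamma_\ast$ to conclude $\mathrm{Re}(U_j^{\ast}BV_\ell)=\mathrm{Im}(U_j^{\ast}BV_\ell)=0$. The justification of differentiability via the multiplicity qualification and $\kappa_r(\mu)\neq 0$ is exactly the paper's argument as well.
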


Now by exploiting Lemma \ref{thm:opt_svecs} we show 
		${\mathcal U}^{\ast} {\mathcal U} = {\mathcal V}^{\ast} {\mathcal V}$.
Geometrically this means that the angle between $U_i$ and $U_j$ is identical with the
angle between $V_i$ and $V_j$.
%In other words the set $\{ U_1, \dots, U_r \}$ can be
%obtained from $\{ V_1, \dots, V_r \}$ by appending the vectors in the latter set with zeros
%and then rotating them.
\begin{lemma} \label{eq:uv}
Under the assumptions of Lemma \ref{thm:opt_svecs} it holds that
$
	{\mathcal U}^{\ast} {\mathcal U} = {\mathcal V}^{\ast} {\mathcal V}.
$
\end{lemma}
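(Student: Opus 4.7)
The plan is to exploit the block structure of $\mathcal{L}(\mu,\Gamma_\ast,A,B)$ together with the relations in~(\ref{eq:opt_sval_svecs}) and Lemma~\ref{thm:opt_svecs} to identify entries of $\mathcal{U}^\ast\mathcal{U}$ and $\mathcal{V}^\ast\mathcal{V}$ with a common quantity of the form $\kappa_r(\mu)^{-1}\,U_i^\ast A V_j$.

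First I would write out the singular vector identities block by block. The $j$-th block row of $\mathcal{L}V=\kappa_r(\mu) U$ reads
\[
(A-\mu_j B)V_j + \sum_{\ell=1}^{j-1}\gamma_{j\ell}\,B V_\ell = \kappa_r(\mu)\,U_j,
\]
while the $\ell$-th block column of $U^\ast\mathcal{L}=\kappa_r(\mu)V^\ast$ reads
\[
U_\ell^\ast(A-\mu_\ell B) + \sum_{j=\ell+1}^{r}\gamma_{j\ell}\,U_j^\ast B = \kappa_r(\mu)\,V_\ell^\ast.
\]
Multiplying the first identity on the left by $U_i^\ast$ with $i\ge j$, every term $U_i^\ast B V_\ell$ in the sum satisfies $i\ge j>\ell$, so by Lemma~\ref{thm:opt_svecs} the sum vanishes. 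This yields $U_i^\ast(A-\mu_j B)V_j=\kappa_r(\mu)\,U_i^\ast U_j$ for $i\ge j$. Symmetrically, multiplying the second identity on the right by $V_i$ with $i\le\ell$ kills the sum (since $j>\ell\ge i$ forces $U_j^\ast B V_i=0$), giving $U_\ell^\ast(A-\mu_\ell B)V_i=\kappa_r(\mu)\,V_\ell^\ast V_i$ for $i\le\ell$.

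Next I would specialize to $i>j$ in the first identity and to $\ell=i$, with the role of $i$ played by $j$, in the second. In both cases the term $U_i^\ast B V_j$ vanishes (Lemma~\ref{thm:opt_svecs} again, since $i>j$), so the two identities collapse to
\[
U_i^\ast A V_j = \kappa_r(\mu)\,U_i^\ast U_j \quad\text{and}\quad U_i^\ast A V_j = \kappa_r(\mu)\,V_i^\ast V_j.
\]
Since $\kappa_r(\mu)\ne 0$, this gives $U_i^\ast U_j=V_i^\ast V_j$ whenever $i>j$; Hermitian conjugation handles $i<j$. The diagonal case $i=j$ is obtained by the same two identities with $i=j$ (the term $\mu_j U_j^\ast B V_j$ appearing in both sides cancels when one equates the two expressions for $U_j^\ast A V_j$), yielding $\|U_j\|_2^2=\|V_j\|_2^2$. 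Combining, $\mathcal{U}^\ast\mathcal{U}=\mathcal{V}^\ast\mathcal{V}$.

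I do not anticipate any real obstacle: the only subtlety is being careful about the range of indices over which the Lemma~\ref{thm:opt_svecs} orthogonality conditions can be invoked to annihilate the triangular coupling terms $\gamma_{j\ell}B$. Once the correct quadrants ($i\ge j$ for the right relation and $i\le\ell$ for the left relation) are chosen, the proof reduces to matching two expressions for $U_i^\ast A V_j$.
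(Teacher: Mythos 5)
Your proof is correct and is essentially the paper's argument carried out block by block: the paper writes the same two singular-vector relations as generalized Sylvester equations, multiplies by $\mathcal U^\ast$ and $\mathcal V$, and subtracts to get $\kappa_r(\mu)(\mathcal U^\ast\mathcal U-\mathcal V^\ast\mathcal V)=C\,\mathcal U^\ast B\mathcal V-\mathcal U^\ast B\mathcal V\,C$, which is strictly upper triangular by Lemma~\ref{thm:opt_svecs} yet Hermitian, hence zero. Your index-by-index matching of the two expressions for $U_i^\ast A V_j$ on the lower triangle plus Hermitian conjugation for the upper triangle is exactly this computation in entrywise form.
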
 
\begin{proof} Expressing the first two equalities in the singular value characterization~(\ref{eq:opt_sval_svecs}) 
in matrix form yields the generalized Sylvester equations
\begin{equation*}
%  \label{eq:genSylv}
	A{\mathcal V}	-   B{\mathcal V}C(\mu,\Gamma_{\ast}) = \kappa_r(\mu) {\mathcal U}
\end{equation*}
and
\begin{equation*}
	{\mathcal U}^\ast A	-   C(\mu,\Gamma_{\ast}) {\mathcal U}^\ast B = \kappa_r(\mu) {\mathcal V}^\ast.
\end{equation*}
By multiplying the first equation with ${\mathcal U}^\ast$ from the left-hand side,
multiplying the second equation 
with ${\mathcal V}$ from the right-hand side, and then
subtracting the second equation from the first we 
obtain
\begin{equation}\label{eq:usu-vsv}
	\kappa_r(\mu) \left(
				{\mathcal U}^\ast {\mathcal U}		-	{\mathcal V}^\ast {\mathcal V}
				\right) 
						=
	C(\mu,\Gamma_{\ast}) {\mathcal U}^\ast B {\mathcal V}	-	{\mathcal U}^\ast B{\mathcal V} C(\mu,\Gamma_{\ast}).
\end{equation}
Lemma \ref{thm:opt_svecs} implies that ${\mathcal U}^\ast B{\mathcal V}$ is upper triangular.
Since $C(\mu,\Gamma_{\ast})$ is also upper triangular, the right-hand side in (\ref{eq:usu-vsv}) is
\emph{strictly} upper triangular. But the left-hand side in (\ref{eq:usu-vsv}) is Hermitian, implying that the right-hand
side is indeed zero, which -- together with $\kappa_r(\mu) \not=0$ -- completes the proof.
\end{proof}

The result of Lemma~\ref{eq:uv} implies $\| {\mathcal U} {\mathcal V}^{+} \|_2 = 1$.
A formal proof of this implication can be found in \cite[Lemma 2]{Malyshev1999} and \cite[Theorem 2.5]{Mengi2009}.
Indeed, the equality $\| {\mathcal U} {\mathcal V}^{+} \|_2 = 1$ can
be directly deduced from $\| {\mathcal U}{\mathcal V}^+ x\|_2 = \| {\mathcal V}{\mathcal V}^+ x\|_2$ for every $x$
(implied by Lemma~\ref{eq:uv}), and $\| VV^+ \|_2 = 1$ (since $VV^+$ is an orthogonal projector).

% 
% 
% If ${\mathcal V}$ has full column rank then it can be directly seen from
% ${\mathcal U} {\mathcal V}^{+} = {\mathcal U} ({\mathcal V}^T {\mathcal V}) {\mathcal V}^{+}$
%
% Consider multiplying a vector with ${\mathcal U} {\mathcal V}^{+}$ from right. The transformation 
% 	$ \left( {\mathcal U} {\mathcal V}^{+} \right) w = \left( {\mathcal U}{\mathcal V}^{+} {\mathcal V} {\mathcal V}^{+} \right) w$ 
% first orthogonally projects the vector $w$ onto the column-space of ${\mathcal V}$. This is followed by a change 
% of basis from the column-space of ${\mathcal V}$ to the column-space of ${\mathcal U}$. Orthogonal projection
% cannot stretch the vector, and because of property (\ref{eq:align_svecs}) the change
% of basis does not affect the length. Therefore $\| {\mathcal U} {\mathcal V}^{+} \|_2 = 1$. A formal proof 
% of the fact that the property (\ref{eq:align_svecs}) implies $\| {\mathcal U} {\mathcal V}^{+} \|_2 = 1$ was 
% given in \cite[Lemma 2]{Malyshev1999} and \cite[Theorem 2.5]{Mengi2009}.
\begin{theorem}
Suppose that the multiplicity qualification holds at $(\mu,\Gamma_{\ast})$ for the pencil 
$A - \lambda B$. Then the perturbation 
$\Delta A_{\ast}$ defined in~(\ref{eq:optimal_perturbation}) satisfies
$
	\| \Delta A_{\ast} \|_2 = \kappa_r(\mu).
$
\end{theorem}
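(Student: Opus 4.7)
The plan is to combine Lemma~\ref{eq:uv} with the definition~(\ref{eq:optimal_perturbation}) of $\Delta A_\ast$ and reduce the theorem to showing $\|\mathcal U \mathcal V^+\|_2 = 1$. If $\kappa_r(\mu) = 0$, then $\Delta A_\ast = 0$ by~(\ref{eq:optimal_perturbation}) and the claim is trivial, so I would first dispose of this case and then restrict to $\kappa_r(\mu) > 0$, the regime in which Lemmas~\ref{thm:opt_svecs} and~\ref{eq:uv} are directly available.

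From~(\ref{eq:optimal_perturbation}) one has $\|\Delta A_\ast\|_2 = \kappa_r(\mu)\,\|\mathcal U \mathcal V^+\|_2$, so everything hinges on evaluating $\|\mathcal U \mathcal V^+\|_2$. I would invoke Lemma~\ref{eq:uv} to replace $\mathcal U^\ast \mathcal U$ by $\mathcal V^\ast \mathcal V$; for every $x$ this yields
\[
 \|\mathcal U \mathcal V^+ x\|_2^2 = (\mathcal V^+ x)^\ast\, \mathcal U^\ast \mathcal U\, (\mathcal V^+ x) = (\mathcal V^+ x)^\ast\, \mathcal V^\ast \mathcal V\, (\mathcal V^+ x) = \|\mathcal V \mathcal V^+ x\|_2^2.
\]
Next I would observe that $\mathcal V \mathcal V^+$ is the orthogonal projector onto $\mathrm{range}(\mathcal V)$, so its operator norm equals $1$ provided $\mathcal V \neq 0$, which is guaranteed by $\|V\|_2 = 1$. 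Combining the two displays gives $\|\mathcal U \mathcal V^+\|_2 = \|\mathcal V \mathcal V^+\|_2 = 1$, and multiplying by $\kappa_r(\mu)$ finishes the argument.

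There is no real obstacle left to overcome: the geometric content of the theorem — that the rank-one-block correction has exactly the norm predicted by the singular value — is already encoded in Lemma~\ref{eq:uv}, which says that $\mathcal U$ and $\mathcal V$ have identical Gramians and therefore that $\mathcal U$ acts isometrically on the column space of $\mathcal V^+$. The remaining work is just the two elementary manipulations above together with a sanity check on the degenerate case $\kappa_r(\mu)=0$; no new estimate on $\Delta A_\ast$ or new use of the multiplicity/independence qualifications is required at this step.
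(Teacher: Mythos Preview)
Your proposal is correct and follows essentially the same route as the paper: both reduce the claim to $\|\mathcal U\mathcal V^+\|_2=1$, deduce $\|\mathcal U\mathcal V^+ x\|_2=\|\mathcal V\mathcal V^+ x\|_2$ from Lemma~\ref{eq:uv}, and finish by noting that $\mathcal V\mathcal V^+$ is a nonzero orthogonal projector. The paper likewise disposes of the case $\kappa_r(\mu)=0$ separately at the outset of the subsection.
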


\subsection{Satisfaction of the rank condition by the optimally perturbed pencil}
Now we assume that the linear independence qualification (Definition~\ref{def:linin}) holds
at $(\mu,\Gamma_{\ast})$ for the pencil $A - \lambda B$. In particular we assume we can 
choose a right singular ``vector'' $\; {\rm vec} \left( {\mathcal V} \right) \;$ so that 
${\mathcal V}$ has full column rank. We will establish that 
\begin{equation} \label{eq:rank}
	{\rm rank} \big(  {\mathcal L} (\mu, \Gamma_{\ast},A+\Delta A_{\ast},B)  \big) \leq mr - r
\end{equation}
for $\Delta A_{\ast}$ defined as in~(\ref{eq:optimal_perturbation}).

Writing the first part of the singular vector characterization~(\ref{eq:opt_sval_svecs}) in matrix form leads
to the generalized Sylvester equation
\begin{equation*}
%  \label{eq:genSylv}
  A{\mathcal V}	-   B{\mathcal V}C(\mu,\Gamma_{\ast}) = \kappa_r(\mu) {\mathcal U}.
\end{equation*}
The fact that ${\mathcal V}$ has full column rank implies
${\mathcal V}^{+} {\mathcal V} = I$ and hence
\[ \displaystyle \begin{array}{rrcl}
&	A{\mathcal V} - B{\mathcal V}C(\mu,\Gamma_{\ast}) & = & \kappa_r(\mu) {\mathcal U} {\mathcal V}^{+} {\mathcal V} \\
								\Longrightarrow & 
					(A - \kappa_r(\mu) {\mathcal U} {\mathcal V}^{+}) {\mathcal V} - B{\mathcal V}C(\mu,\Gamma_{\ast}) & = & 0  \\
								\Longrightarrow & 
					(A + \Delta A_{\ast}) {\mathcal V} - B{\mathcal V}C(\mu,\Gamma_{\ast}) & = & 0. 
\end{array}
\]
Let us consider $
	{\mathcal M} =	\{
					D \in \C^{r\times r} : C(\mu,\Gamma_{\ast})D - D C(\mu,\Gamma_{\ast}) = 0
				\},
$
the subspace of all $r\times r$ matrices commuting with $C(\mu,\Gamma_{\ast})$.
By Theorem \ref{thm:Sylvester_matrix}, ${\mathcal M}$ is a subspace of dimension at least~$r$. Clearly
for all $D \in {\mathcal M}$, we have
\[
		0 =  (A + \Delta A_{\ast}) {\mathcal V}D - B{\mathcal V}C(\mu,\Gamma_{\ast})D
		   =  (A + \Delta A_{\ast}) ({\mathcal V}D) - B({\mathcal V}D) C(\mu,\Gamma_{\ast}).
\]
In other words,  
		$\{	{\mathcal V}D : D \in {\mathcal M}	\}$
has dimension at least $r$ (using the fact that ${\mathcal V}$ has full column rank)
and represents a subspace of solutions to the generalized Sylvester equation
\[
	(A + \Delta A_{\ast}) X - B X C(\mu,\Gamma_{\ast}) = 0.
\]
Reinterpreting this result in terms of the matrix representation, the desired
rank estimate~(\ref{eq:rank}) follows. This completes the derivation of 
${\mathcal P}_r(\mu) \leq \kappa_r(\mu)$ under the stated multiplicity and linear independence assumptions.

% Finally, by vectorizing the generalized Sylvester equation we deduce that
% \[
% 	\left(
% 		((I \otimes (A+\Delta A_{\ast})) - (C^T(\mu,\Gamma_{\ast}) \otimes B)
% 	\right)
% 		{\rm vec}(X)   =  {\mathcal L}(\mu,\Gamma_{\ast},A + \Delta A_{\ast},B) {\rm vec}(X) = 0
% \]
% for all $X \in  \{	{\mathcal V}D : D \in {\mathcal M}	\}$, yielding the desired result below.

% \begin{theorem}\label{thm:rank_optimally_perturbed}
% Suppose that the linear independence qualification holds at $(\mu, \Gamma_{\ast})$ 
% for the pencil $A - \lambda B$. Then we have
% \begin{equation}\label{eq:rank_optimally_perturbed}
% 	{\rm rank} \left(  {\mathcal L}(\mu,\Gamma_{\ast},A+\Delta A_{\ast},B)  \right) \leq mr - r.
% \end{equation}
% \end{theorem}

\subsection{Main Result} \label{sec:mainresult}

To summarize the discussion above, we have obtained the singular value characterization
		\begin{equation} \label{eq:svcharacterization}
		\tau_r({\mathbb S}) = 
					\inf_{\mu \in {\mathbb S}^r} \sup_{\Gamma} \sigma_{mr -r + 1} 
							\left(	{\mathcal L} \left( \mu,\Gamma,A,B \right)	\right).
		\end{equation}
Among our assumptions, we have 
\begin{equation}\label{eq:assump_removable}
	\textbf{(i)} \; \text{the KCF of $A-\lambda B$ has no right singular blocks} \;\;\;\;\;\; {\rm and} \;\;\;\;\;\; 
	\textbf{(ii)} \; \Gamma_\ast \in {\mathcal G}(\mu).
\end{equation}
In this section, we show that these two assumptions can be dropped. We still require that
${\rm rank}(B) \ge r$. As explained in the introduction, the distance problem becomes ill-posed
otherwise.

\begin{enumerate}
 \item[\bf (i)]
% Suppose that the KCF of $A-\lambda B$ does not contain a singular block of the form $F_p - \lambda G_p$.
% Then $B+\varepsilon A$ has rank $m$ for sufficiently small $\varepsilon>0$. Since $\mu$
% is an eigenvalue of $A+\Delta A - \lambda B$ if and only if $\mu / ( 1 + \varepsilon \mu)$
% is an eigenvalue of $A+\Delta A - \lambda ( B + \varepsilon A)$, the value on the left of~\eqref{eq:svcharacterization}
% remains the same if we replace $B$ by $B + \varepsilon A$ and $\mathbb S = \{\lambda_1,\ldots,\lambda_k\}$
% by $\tilde{\mathbb S} = \{\lambda_1/ ( 1 + \varepsilon \lambda_1),\ldots,\lambda_k/ ( 1 + \varepsilon \lambda_k)\}$.
% On the other hand, the continuity of $\sigma_{mr-r+1}  \left(  {\mathcal L}(\mu, \Gamma, A, B)   \right)$ and
% the assumption that its supremum is attained at a finite $\Gamma$ imply
% \[
%  \lim_{\varepsilon\to 0} \inf_{\mu \in {\tilde{\mathbb S}}^r} \sup_{\Gamma} \sigma_{mr -r + 1} \left(	{\mathcal L} \left( \mu,\Gamma,A,B+\varepsilon A \right) \right)
% =
% \inf_{\mu \in {\mathbb S}^r} \sup_{\Gamma} \sigma_{mr -r + 1} \left(	{\mathcal L} \left( \mu,\Gamma,A,B \right)\right).
% \]

%\item
Suppose that the KCF of $A-\lambda B$ contains a right singular block $F_p - \lambda G_p \in \R^{p\times (p+1)}$
for some $p\ge 0$. By~\cite[Sec. 4]{Kosir96}, the generalized Sylvester equation $F_p Y - G_p X C(\mu,\Gamma) = 0$
has a solution space of dimension $r$. This implies that also the solution space of $AX - BX C(\mu,\Gamma) = 0$ has dimension
at least $r$, and consequently $\sigma_{mr -r + 1} \left( {\mathcal L} \left( \mu,\Gamma,A,B \right)\right)$ is always zero.
On the other hand, the presence of a right singular block implies that for \emph{any} $\varepsilon>0$ and $\mu_1,\ldots,\mu_r \in \C$
with $r\le \text{rank}(B)$
there is a perturbation $\triangle A$ such that $\|\triangle A\|_2\le \varepsilon$ and $(A + \triangle A) - \lambda B$
has eigenvalues $\mu_1,\ldots,\mu_r$, see~\cite{DeTK12}. 
This shows $\tau_r({\mathbb S}) = 0$ and hence both sides of~\eqref{eq:svcharacterization} are equal to zero.

In summary, we can replace the assumption \textbf{(i)} by the weaker assumption ${\rm rank}(B) \geq r$.

 \item[\bf (ii)]
To address \textbf{(ii)}, we first note that both ${\mathcal P}_r(\mu)$ and 
$\kappa_r(\mu)$, defined in  (\ref{eq:defn_P}) and (\ref{eq:lowerbound}), change continuously with 
respect to $\mu$. Suppose that $\mu$ has repeating elements, which allows for the possibilitiy
that  $\Gamma_\ast \notin {\mathcal G}(\mu)$. But for all $\tilde{\mu}$ with distinct elements, we necessarily have ${\mathcal G}(\tilde{\mu}) = \C^{r(r-1)/2}$.
Moreover, when $\tilde{\mu}$ is sufficiently close to $\mu$ then
${\mathcal P}_r(\tilde{\mu}) = \kappa_r(\tilde{\mu})$, provided that the multiplicity
 and linear independence assumptions hold at $(\mu,\Gamma_\ast)$ (implying the
 satisfaction of these two assumptions for $\tilde{\mu}$ also). Then the 
 equality ${\mathcal P}_r(\mu) = \kappa_r(\mu)$ follows from continuity. Consequently,
the assumption \textbf{(ii)} in (\ref{eq:assump_removable}) is also not needed for the 
singular value characterization.

\end{enumerate}

% The case when ${\rm rank}(B) < r$ is worth a remark. In this case, an inspection of the KCF immediately reveals that
% there is no perturbation $\triangle A$ such that the finite eigenvalue multiplicities of $(A+\triangle A) - \lambda B$
% sum up to $r$, and hence $\tau_r({\mathbb S}) = \infty$. On the other hand,
% \[
% 	\sigma_{mr-r+1}  \left(  {\mathcal L}(\mu, \Gamma, A, B)   \right) \rightarrow \infty \;\;\;
% 			{\rm as}  \;\;\; \| \Gamma \| \rightarrow \infty.
% \] 
% Both the distance $\tau_r({\mathbb S})$ and the supremum of the singular value function blow-up.
%When $B$ is slightly perturbed by $\Delta B$ so that ${\rm rank}(B+\Delta B) \geq r$, the perturbed 
%pencil will have very large eigenvalues meaning the set ${\mathcal S}(B+\Delta B)$ will
%consist of $\Delta A$ with large norm, equivalently $\tau_r({\mathbb S})$ is large.

We conclude this section by stating the main result of this paper.
\begin{theorem}[Nearest Pencils with Specified Eigenvalues]\label{thm:sval_char_pres_eigs}
Let $A - \lambda B$ be an $n\times m$ pencil with $n \geq m$, let $r$
be a positive integer such that $r \leq {\rm rank}(B)$ and let ${\mathbb S} = \{ \lambda_1, \dots, \lambda_k \}$ 
be a set of distinct complex scalars.
\begin{enumerate}
	\item[\bf (i)] Then
		\[
		\tau_r({\mathbb S}) = 
					\inf_{\mu \in {\mathbb S}^r} \sup_{\Gamma} \sigma_{mr -r + 1} 
							\left(	{\mathcal L} \left( \mu,\Gamma,A,B \right)	\right)
		\]
	holds, provided that the optimization problem on the right is attained at some $(\mu_{\ast}, \Gamma_{\ast})$ for which 
$\Gamma_\ast$ is finite and the 
	multiplicity as well as the linear independence qualifications hold.
	\item[\bf (ii)] A minimal perturbation $\Delta A_{\ast}$ such that $\sum_{j=1}^k m(A + \Delta A_{\ast}, B) \geq r$
	is given by (\ref{eq:optimal_perturbation}), with $\mu$ replaced by $\mu_{\ast}$.
\end{enumerate}
\end{theorem}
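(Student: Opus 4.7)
The plan is to establish matching lower and upper bounds for $\tau_r(\mathbb{S})$, with part (ii) emerging as a byproduct of the construction used in the upper bound. For the lower bound I would fix $\mu \in \mathbb{S}^r$ and any $\Gamma \in \mathcal{G}(\mu)$, and combine Corollary~\ref{thm:spec_eigvals_rankl} --- which translates $\sum_j m_j(A+\Delta A, B) \geq r$ into a rank drop of $\mathcal{L}(\mu, \Gamma, A+\Delta A, B)$ --- with the Eckart--Young-type identity in Lemma~\ref{thm:nearest_rankr}. Exploiting the linear dependence of $\mathcal{L}$ on $A$, this gives $\mathcal{P}_r(\mu) \geq \sigma_{mr-r+1}(\mathcal{L}(\mu,\Gamma,A,B))$; taking the supremum over $\Gamma$ delivers $\mathcal{P}_r(\mu) \geq \kappa_r(\mu)$, and infimizing over $\mu$ completes one direction.

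For the reverse inequality, at the optimizer $(\mu_*, \Gamma_*)$ I would propose the explicit candidate $\Delta A_* = -\kappa_r(\mu_*)\,\mathcal{U}\mathcal{V}^+$ built from a leading singular vector pair of $\mathcal{L}(\mu_*,\Gamma_*,A,B)$, and verify the two properties that certify it as optimal. The norm identity $\|\Delta A_*\|_2 = \kappa_r(\mu_*)$ reduces, via the standard fact that $\mathcal{U}^*\mathcal{U} = \mathcal{V}^*\mathcal{V}$ implies $\|\mathcal{U}\mathcal{V}^+\|_2 = 1$, to establishing that Gram identity. Here the multiplicity qualification lets me differentiate $\sigma_{mr-r+1}(\mathcal{L}(\mu_*,\Gamma,A,B))$ analytically in $\Gamma$ at $\Gamma_*$; stationarity yields $U_j^* B V_\ell = 0$ for all $\ell < j$. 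Combined with the pair of matrix Sylvester equations implied by (\ref{eq:opt_sval_svecs}), subtraction produces $\kappa_r(\mu_*)(\mathcal{U}^*\mathcal{U} - \mathcal{V}^*\mathcal{V})$ on one side and a commutator of two upper triangular matrices (hence strictly upper triangular) on the other, forcing both to vanish by Hermiticity. The rank condition $\operatorname{rank}(\mathcal{L}(\mu_*,\Gamma_*,A+\Delta A_*,B)) \leq mr-r$ follows from the linear independence qualification: $\mathcal{V}^+\mathcal{V} = I$ makes $\mathcal{V}$ itself a solution of $(A+\Delta A_*)X - BXC(\mu_*,\Gamma_*)=0$, and since the commutant of $C(\mu_*,\Gamma_*)$ has dimension at least $r$ by Theorem~\ref{thm:Sylvester_matrix}, right-multiplication by elements of that commutant produces an $r$-dimensional solution space which vectorizes to the required rank bound.

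Finally, I would discharge the two auxiliary hypotheses. If a right singular block $F_p - \lambda G_p$ is present, then on one side $\tau_r(\mathbb{S}) = 0$ because the pencil can be perturbed by arbitrarily small $\Delta A$ to exhibit any prescribed eigenvalues (invoking the cited perturbation result for rectangular pencils), while on the other side $F_p Y - G_p X C(\mu,\Gamma) = 0$ admits an $r$-dimensional solution space, forcing $\sigma_{mr-r+1}(\mathcal{L}(\mu,\Gamma,A,B))=0$ as well, so the claimed identity holds trivially. The restriction $\Gamma_* \in \mathcal{G}(\mu)$ only bites when $\mu$ has repeated entries; for $\tilde\mu$ with distinct entries one has $\mathcal{G}(\tilde\mu) = \mathbb{C}^{r(r-1)/2}$ automatically, and continuity of both $\mathcal{P}_r$ and $\kappa_r$ in $\mu$ lets me pass to the limit $\tilde\mu \to \mu$.

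The main obstacle will be the structural identity $\mathcal{U}^*\mathcal{U} = \mathcal{V}^*\mathcal{V}$: it is where first-order optimality in $\Gamma$, the upper triangular parameterization of $C(\mu,\Gamma)$, and Hermiticity must cooperate to annihilate a right-hand side that a priori is not obviously zero. This is precisely the step that transplants the Malyshev--Mengi-style construction from the standard eigenvalue setting to the present generalized-pencil setting, and everything else is either assembly or a routine continuity/genericity argument.
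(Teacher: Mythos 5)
Your proposal follows the paper's argument essentially step for step: the lower bound via Corollary~\ref{thm:spec_eigvals_rankl} and Lemma~\ref{thm:nearest_rankr}, the upper bound via the explicit perturbation $\Delta A_\ast = -\kappa_r(\mu_\ast)\,{\mathcal U}{\mathcal V}^+$ with the Gram identity ${\mathcal U}^{\ast}{\mathcal U}={\mathcal V}^{\ast}{\mathcal V}$ obtained from stationarity in $\Gamma$ plus the Sylvester-equation subtraction, the rank bound from the commutant of $C(\mu_\ast,\Gamma_\ast)$, and the same two-part removal of the right-singular-block and $\Gamma_\ast\in{\mathcal G}(\mu)$ assumptions. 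This matches the paper's derivation in Sections~\ref{sec:SVD_derivation}--\ref{sec:mainresult}, so no further comparison is needed.
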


\section{Corollaries of Theorem \ref{thm:sval_char_pres_eigs}}\label{sec:nearest_rect_pencil}
As discussed in the introduction one potential application of Theorem~\ref{thm:sval_char_pres_eigs}
is in control theory, to ensure 
that the eigenvalues lie in a particular region in the complex plane. Thus let $\Omega$ be a subset
of the complex plane. Then, provided that the assumptions
of Theorem~\ref{thm:sval_char_pres_eigs} hold, we have the following singular value characterization for the distance to the
nearest pencil with $r$ eigenvalues in $\Omega$:
\begin{equation}\label{eq:nearest_pencil_region}
\begin{split}
	\tau_r(\Omega) := & \inf_{{\mathbb S} \subseteq \Omega} \tau_r({\mathbb S}) \hskip 32ex  \\
				 = & \inf_{{\mathbb S} \subseteq \Omega} \inf_{\mu \in {\mathbb S}^r} \sup_{\Gamma} \sigma_{mr -r + 1} 
							\big(	{\mathcal L} \left( \mu,\Gamma,A,B \right)	\big) \\
				= & \inf_{\mu \in \Omega^r} \sup_{\Gamma} \sigma_{mr -r + 1} 
							\big(	{\mathcal L} \left( \mu,\Gamma,A,B \right)	\big), \hskip 7ex
\end{split}
\end{equation}
where $\Omega^r$ denotes the set of vectors of length $r$ with all entries in $\Omega$. 

When the pencil $A - \lambda B$ is rectangular, that is $n > m$, the pencil 
has generically no eigenvalues. Then the distance to the nearest rectangular pencil with $r$ eigenvalues is of
interest. In this case, the singular value characterization takes the following form:
\begin{equation}\label{eq:nearest_pencil_cplane}
	\tau_r(\C) =  \inf_{\mu \in \C^r} \sup_{\Gamma} \sigma_{mr -r + 1} 
							\left(	{\mathcal L} \left( \mu,\Gamma,A,B \right)	\right).
\end{equation}
The optimal perturbations $\Delta A_{\ast}$ such that the pencil $(A + \Delta A_{\ast}) - \lambda B$ 
has eigenvalues (in $\C$ and $\Omega$) are given by (\ref{eq:optimal_perturbation}),
with $\mu$ replaced by the minimizing $\mu$ values in (\ref{eq:nearest_pencil_cplane}) 
and (\ref{eq:nearest_pencil_region}), respectively.

% DK: 
% I feel a bit uncomfortable about Section 5, in particular
% (21). Is there any hope that these qualifcation will hold
% for all mu in C^r ? I suggest to keep the characterizations
% but we might consider phrasing Section 5 differently in a revision.

\section{Multiplicity and linear independence qualifications}\label{sec:qualifications}

The results in this paper are proved under the assumptions of
multiplicity and linear independence qualifications. This section provides 
an example for which the multiplicity and
linear independence qualifications are not satisfied for the optimal value of
$\Gamma$. Note that this does not mean that these assumptions are necessary to
prove the results from this paper. In fact, numerical experiments suggest
that our results may hold even if these assumptions are
not satisfied.

Consider the pencil
\[ 
\left[
\begin{array}{rcc}
-1 & 0 & 0 \\
0 & 5 & 0 \\
0 & 0 & 2
\end{array}
\right]
-\lambda
\begin{bmatrix}
0 & 0 & 0 \\
0 & 1 & 0 \\
0 & 0 & 1 
 \end{bmatrix}. \]
Let $\mu=\begin{bmatrix} 5 & 1 \end{bmatrix}^T$, that
is, the target eigenvalues are $5$ and $1$. 
Then it is easy to see that the optimal perturbation is given by 
\[ \Delta A_{\ast} = \left[ \begin{array}{ccr}   0 & 0 & 0 \\ 0 & 0 & 0 \\ 0 & 0 & -1 \end{array} \right]. \]
The singular values of the matrix ${\mathcal L}(\mu,\gamma,A,B)$ are
\[ 0, 1, \sqrt{ 16+|\gamma|^2 },
\sqrt{ 5+\frac{1}{2}|\gamma|^2\pm \frac{1}{2} \sqrt{|\gamma|^4+20|\gamma|^2+64} } \]
where the multiplicity of the singular value 1 is two. Hence 
\[
	\sigma_5\left({\mathcal L}(\mu,\gamma,A,B) \right) = 
		\sqrt{
			5+\frac{1}{2}|\gamma|^2 - \frac{1}{2} \sqrt{|\gamma|^4+20|\gamma|^2+64}
			}.
\] 
Clearly the supremum is attained for $\gamma=0$ and $\sigma_5\left({\mathcal L}(\mu,0,A,B) \right) = 1$. 
Hence the multiplicity condition at the optimal $\gamma$ is violated. All three pairs of singular vectors corresponding
to the singular value 1 at the optimal $\gamma$ violate the linear independence condition, but one pair does 
lead to the optimal perturbation $\Delta A_{\ast}$.

\section{Computational issues}\label{sec:computation}

A numerical technique that can be used to compute $\tau_r(\Omega)$ and $\tau_r(\C)$
based on the singular value characterizations was already described in \cite{Mengi2009, Mengi2010}.
For completeness, we briefly recall this technique in the following. The
distances of interest can be characterized as
\[
	\tau_r(\Omega) = \inf_{\mu \in \Omega^r} g(\mu) \hskip 7ex 
					{\rm and}  \hskip 7ex
	\tau_r(\C) = \inf_{\mu \in \C^r} g(\mu),
\]
where $g : \C^r \rightarrow \R$ is defined by
\[
	g(\mu) := \sup_{\Gamma \in \C^{r(r-1)/2}} \sigma_{mr-r+1}  \big(  {\mathcal L}(\mu,\Gamma,A,B)  \big).
\]

The inner maximization problems are solved by BFGS, even though $\sigma_{mr-r+1}(\cdot)$ 
is not differentiable at multiple singular values. In practice this is not a major issue for BFGS as long as
a proper line search (e.g., a line search respecting weak Wolfe conditions) is used, as the multiplicity 
of the $r$th smallest singular value is one generically with respect to $\Gamma$  for any given $\mu$;
see the discussions in~\cite{LO12}. If the multiplicity and linear independence 
qualifications hold at a local maximizer $\Gamma_{\ast}$, then $\Gamma_{\ast}$ is in fact a global 
maximizer and hence $g(\mu)$ is retrieved. If, on the other hand, BFGS converges to a point where 
one of these qualifications is violated, it needs to be restarted with a different initial guess. In practice 
we have almost always observed convergence to a global maximizer immediately,
without the need for such a restart.

Although the function $g(\mu)$ is in general non-convex, it is Lipschitz continuous:
\[
	| g(\mu + \delta \mu) - g(\mu) | \leq \| \delta\mu \|_2 \cdot \| B \|_2.
\]
There are various Lipschitz-based global optimization algorithms in the literature stemming 
mainly from ideas due to Piyavskii  and Shubert (see \cite{Piyavskii1972, Shubert1972}). The Piyavskii-Shubert
algorithm is based on the idea of constructing a piecewise linear approximation lying beneath the Lipschitz 
function. We used DIRECT (see \cite{Jones1993}), a sophisticated variant of the Piyavskii-Shubert algorithm.
DIRECT attempts to estimate the Lipschitz constant locally, which can possibly speed up 
convergence.

The main computational cost involved in the numerical optimization of singular values is 
the retrieval of the $r$th smallest singular value of ${\mathcal L}(\mu,\Gamma,A,B)$ 
at various values of $\mu$ and $\Gamma$. As we only experimented with small pencils,
we used direct solvers for this purpose. For medium to large scale
pencils, iterative algorithms such as the Lanczos method (see \cite{Golub1996}) are more appropriate.

\section{Numerical Experiments}\label{sec:numerical_exp}
Our algorithm is implemented in Fortran, calling routines
from LAPACK for singular value computations, the limited memory BFGS routine
written by J. Nocedal (discussed in \cite{Liu1989}) for inner maximization problems, 
and an implementation of the DIRECT algorithm by Gablonsky (described 
in \cite{Gablonsky2001}) for outer Lipschitz-based minimization. A mex interface provides 
convenient access via {\sc Matlab}.

The current implementation is not very reliable, which appears to be
related to the numerical solution of the outer Lipschitz minimization 
problem, in particular the DIRECT algorithm and its termination criteria.
We rarely obtain results that are less accurate than the prescribed
accuracy. The multiplicity and linear independence qualifications usually 
hold in practice and don't appear to affect the numerical accuracy.
For the moment, the implementation is intended for small
pencils (\textit{e.g.}, $n,m < 100$). 
% The examples below show the
% validity of the singular value characterizations derived in practice.

\subsection{Nearest Pencils with Multiple Eigenvalues}
As a corollary of Theorem \ref{thm:sval_char_pres_eigs} it follows that 
for a square pencil $A - \lambda B$ the nearest pencil having 
${\mathbb S} = \{ \mu \}$ as a multiple eigenvalue is given by
\[
	\tau_2({\mathbb S}) =
	\sup_{\gamma}
	\left(
		\left[
			\begin{array}{cc}
						  A - \mu B & 0 \\
					\gamma B		& A - \mu B \\
			\end{array}
		\right]
	\right)
\]
provided that the multiplicity and linear independence qualifications are satisfied at the
optimal $(\mu,\gamma_\ast)$. Therefore, for the distance from $A - \lambda B$ to the nearest 
square pencil with a multiple eigenvalue the singular value characterization takes the
form
\begin{equation}\label{eq:dist_defect}
	\inf_{\mu \in \C}
	\sup_{\gamma}
	\sigma_{2n-1}
	\left(
		\left[
			\begin{array}{cc}
				A - \mu B & 0 \\
				\gamma B		& A - \mu B \\
			\end{array}
		\right]
	\right).
\end{equation}

Specifically, we consider the pencil
\begin{equation}\label{eq:num_examp1_pencil}
	A-\lambda B = 
	\left[
		\begin{array}{rrr}
			 2 & -1 & -1 \\
			-1 &  2 & -1 \\
			-1 & -1 &  2 \\ 
		\end{array}
	\right]
			-
		\lambda
	\left[
		\begin{array}{rrr}
			-1 & 2 & 3 \\
			 2 & -1 & 2 \\
			 4 &  2  & -1 \\
		\end{array}
	\right].
\end{equation}
Solving the above singular value optimization problem results in a distance of $0.59299$ to the nearest
pencil with a multiple eigenvalue. By~(\ref{eq:optimal_perturbation}), a nearest pencil turns out to be
\[
	\left[
		\begin{array}{rrr}
			1.91465 & -0.57896 & -1.21173 \\
		        -1.32160 &  1.93256 &  -0.57897 \\
		        -0.72082 & -1.32160 &  1.91466 \\
		\end{array}
	\right]
				-
			\lambda
	\left[
		\begin{array}{rrr}
			-1 & 2 & 3 \\
			 2 & -1 & 2 \\
			 4 &  2  & -1 \\
		\end{array}
	\right],
\]
with the double eigenvalue $\lambda_{\ast} = -0.85488$. The optimal maximizing $\gamma$ turns out
to be zero, which means neither the multiplicity nor the linear independence qualifications hold.
(This is the non-generic case; had we attempted to calculate the distance to the nearest pencil
with $\mu$ as a multiple eigenvalue for a given $\mu$, optimal $\gamma$ appears to be non-zero 
for generic values of $\mu$.)  Nevertheless, the singular value characterization (\ref{eq:dist_defect}) 
remains to be true for the distance as discussed next.  

The $\epsilon$-pseudospectrum of $A - \lambda B$ (subject to perturbations in $A$ only) is the set 
$\Lambda_{\epsilon}(A,B)$ containing the eigenvalues of all pencils $(A+\Delta A) - \lambda B$ such 
that $\| \Delta A \|_2 \leq \epsilon$.  Equivalently,
\[
	\Lambda_{\epsilon}(A,B) = \{
							\lambda \in \C :
							\sigma_{\min}(A - \lambda B) \leq \epsilon
						  \}.
\]
It is well known that the smallest $\epsilon$ such that two components of $\Lambda_{\epsilon}(A,B)$
coalesce equals the distance to the nearest pencil with multiple eigenvalues. (See~\cite{Alam2005}
for the case $B = I$, but the result easily extends to arbitrary invertible $B$.)
Figure \ref{fig:gen_pseudo} displays the pseudospectra of the pencil in~(\ref{eq:num_examp1_pencil}) for 
various levels of $\epsilon$.
Indeed, two components of the $\epsilon$-pseudospectrum coalesce for $\epsilon = 0.59299$,
confirming our result.  
\begin{figure}
   	\begin{center}
			\includegraphics[height=0.4\vsize,]{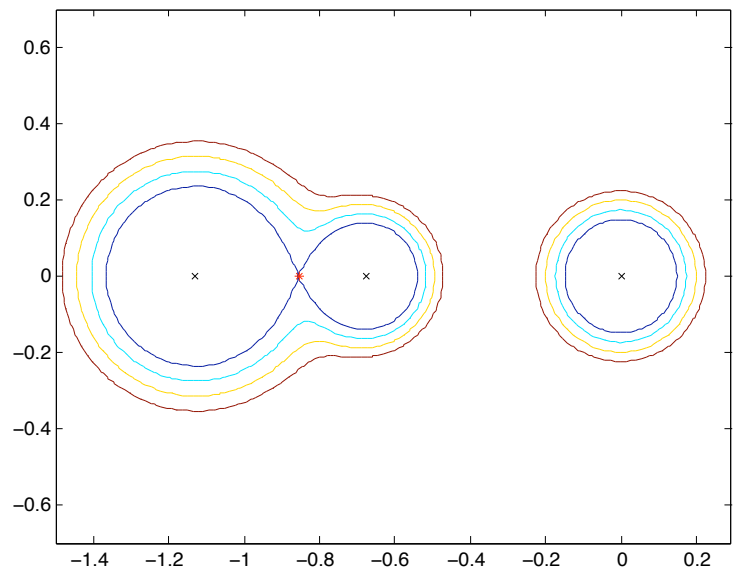}
	\end{center}
	    \caption{Pseudospectra for the pencil in (\ref{eq:num_examp1_pencil}),
with eigenvalues marked by the black crosses.
Two components of the $\epsilon$-pseudospectrum coalesce for $\epsilon = 0.59299$, corresponding
to the distance to a nearest pencil with a multiple eigenvalue $\lambda_\ast = -0.85488$ at the coalescence 
point (marked by the asterisk).}\label{fig:gen_pseudo}
\end{figure}

\subsection{Nearest Rectangular Pencils with at least Two Eigenvalues}

As an example for a rectangular pencil, let us consider the $4\times 3$ pencil
\[
	A - \lambda B = 
	\left[
		\begin{array}{rrr}
		   	1 & 0 	& 0 \\
			0 & 0.1	& 0 \\
			0 & 2		& 0.3 \\
			0 & 1		& 2 \\
		\end{array}
	\right]
			-
	\lambda
	\left[
		\begin{array}{ccc}
			0 & 0 & 0 \\
			1 & 0 & 0 \\
			0 & 1 & 0 \\
			0 & 0 & 1 \\
		\end{array}
	\right].
\]
The KCF of this pencil contains a $4\times 3$ singular block and therefore the pencil has no eigenvalues. 
However, if the entry ${a}_{22}$ is set to zero, the
KCF of the resulting pencil contains a
$2 \times 1$ singular block and a $2\times 2$ regular block 
corresponding to finite eigenvalues. Hence, a perturbation with 2-norm
$0.1$ is sufficient to have two eigenvalues.

According to the corollaries in Section \ref{sec:nearest_rect_pencil} the distance to the 
nearest $4\times 3$ pencil with at least two eigenvalues has the characterization
\begin{equation}\label{eq:rect2_svalchar}
	\tau_2(\C)
			=
	\inf_{\mu \in \C^2}
	\underbrace{
	\sup_{\gamma}
	\sigma_{2m-1}
	\left(
	\left[
		\begin{array}{cc}
			{A} - \mu_1 {B} & 0 \\
			    	\gamma {B} 	  & {A} - \mu_2 {B} \\
		\end{array}
	\right]
	\right)}_{=:g(\mu)}
\end{equation}
for $m = 3$. Our implementation returns $\tau_2(\C) = 0.03927$. The corresponding nearest 
pencil~(\ref{eq:optimal_perturbation}) is given by
\[
	\left[
		\begin{array}{rrr}
			0.99847 & -0.03697 & -0.01283 \\
				0    &	  0.08698 &  0.03689 \\
				0    &  2.00172 &  0.30078 \\
			0.00007 & 1.00095  &  2.00376 \\
		\end{array}	
	\right]	
			-
	\lambda
	\left[
		\begin{array}{ccc}
			0 & 0 & 0 \\
			1 & 0 & 0 \\
			0 & 1 & 0 \\
			0 & 0 & 1 \\
		\end{array}
	\right]
\]
and has eigenvalues at $\mu_1 = 2.55144$ and $\mu_2 = 1.45405$. This result is confirmed
by Figure~\ref{fig:Jordan_rect}, which illustrates the level
sets of the function $g(\mu)$ defined in (\ref{eq:rect2_svalchar}) over $\R^2$.

For this example the optimal $\gamma$ is 2.0086. The smallest three singular values
of the matrix in (\ref{eq:rect2_svalchar}) are $1.4832$, $0.0393$ and $0.0062$ for these optimal
values of $\mu$ and $\gamma$. The linear independence qualification also holds.

\begin{figure}
   	\begin{center}
			\includegraphics[height=0.55\vsize,]{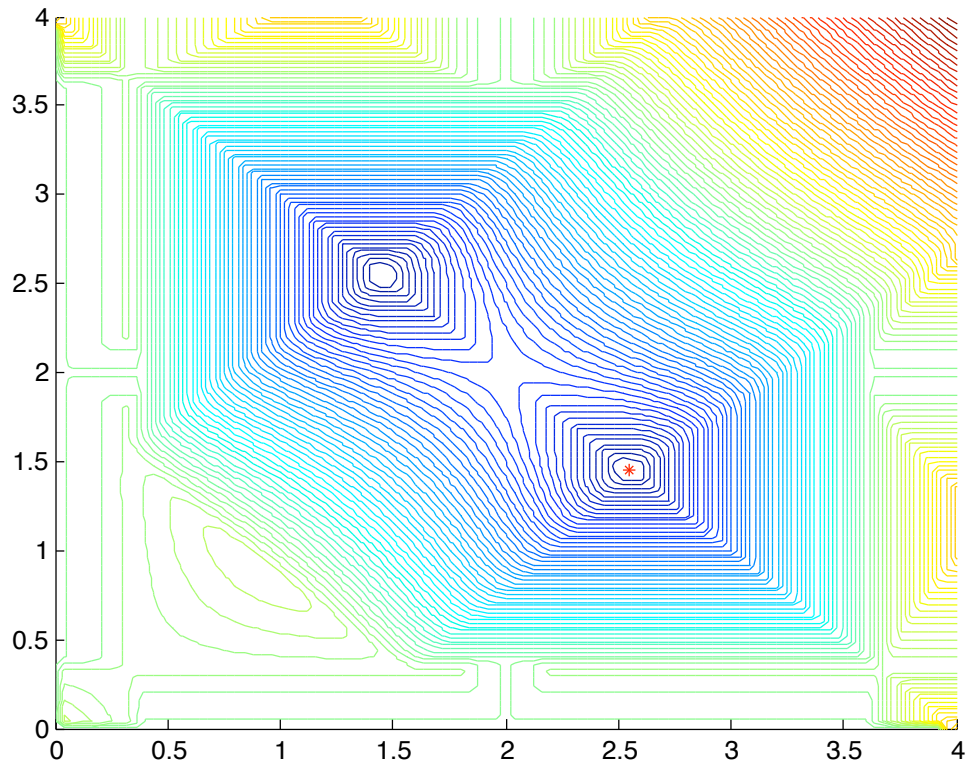}
	\end{center}
	    \caption{Level sets over $\mathbb R^2$ of the function $g(\mu)$ defined
	    in (\ref{eq:rect2_svalchar}). The asterisk marks
	    the numerically computed global minimizer of $g$, which corresponds to the eigenvalues
	    of a nearest pencil with two eigenvalues.}\label{fig:Jordan_rect}
\end{figure}

\subsection{Nearest Stable Pencils}
As a last example, suppose that $Bx^\prime(t) = Ax(t)$ with $A, B \in \C^{n\times n}$ is an unstable descriptor system. 
The distance to a nearest stable descriptor system is a special case of $\tau_n(\Omega)$,
with $\Omega = \C^{-}$, the open left-half of the complex plane. A singular 
value characterization is given by
\[
	\tau_n(\C^{-})
			=
	\inf_{\lambda_j \in \C^{-}} \;\; \sup_{\gamma_{ik} \in \C} \;
	\sigma_{n^2 - n + 1}
	\left(
		\left[
			\begin{array}{cccc}
				 A - \lambda_1 B & 0 & & 0 \\
				 \gamma_{21} B & A - \lambda_2 B & & 0 \\
				       & & \ddots & \\
				 \gamma_{n1} B & \gamma_{n2} B & & A - \lambda_n B \\
			\end{array}
	\right]
	\right).
\]

Specifically, we consider a system with $B = I_2$ and
\begin{equation}\label{eq:2by2unstable}
	A =
		\left[
			\begin{array}{cc}
				0.6 - \frac{1}{3} i & -0.2 + \frac{4}{3} i \\
			        -0.1 + \frac{2}{3} i  &  0.5 + \frac{1}{3} i  \\
			\end{array}
		\right]. 
\end{equation}
Both eigenvalues $\lambda_1 = 0.7 - i$ and $\lambda_2 = 0.4 + i$ are in the
right-half plane. Based on the singular value characterization, we have computed the distance to a nearest stable
system $x'(t) = (A + \Delta A_{\ast}) x(t)$ 
as $0.6610$. The corresponding perturbed matrix
\[
	A + \Delta A_{\ast}
			=
	\left[
			\begin{array}{rr}
				0.0681 - 0.3064i & -0.4629 + 1.2524i \\
			         0.2047 + 0.5858i &  -0.1573 + 0.3064i \\
			\end{array}
	\right] 
\]
at a distance of $0.6610$ has one eigenvalue $\left( \lambda_{\ast} \right)_1 = -0.0885+0.9547i$ 
in the left-half plane and the other  $\left( \lambda_{\ast} \right)_2 = -0.9547i$ on the imaginary axis. 
The $\epsilon$-pseudospectrum of $A$ is depicted in Figure \ref{fig:pss_nearest_stable}. For
$\epsilon = 0.6610$, one component of the 
$\epsilon$-pseudospectrum crosses the imaginary axis, while the other component touches the 
imaginary axis.

\begin{figure}
   	\begin{center}
			\includegraphics[height=0.45\vsize,]{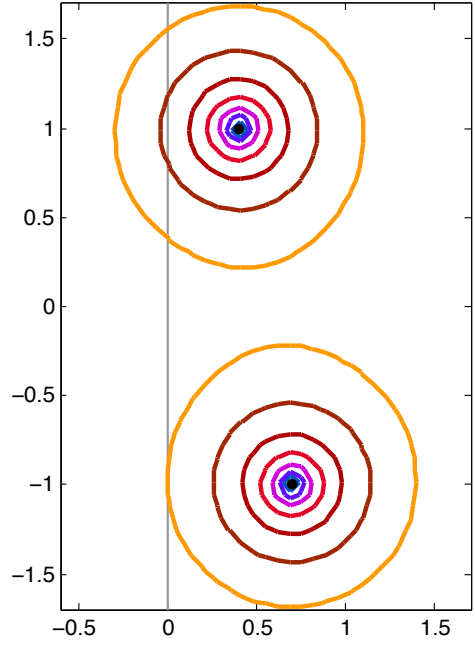}
	\end{center}
	    \caption{Pseudospectra of the matrix $A$ defined
	    in (\ref{eq:2by2unstable}). The outer orange curve represents 
	    the boundary of the $\epsilon$-pseudospectrum for $\epsilon = 0.6610$, the 
	    distance to a nearest stable matrix. }\label{fig:pss_nearest_stable}
\end{figure}

\section{Concluding Remarks}
In this work a singular value characterization has been derived for the 2-norm of a smallest perturbation 
to a square or a rectangular pencil $A - \lambda B$ such that the perturbed pencil has a desired 
set of eigenvalues. The immediate corollaries of this main result are
\begin{enumerate}
	\item[\bf (i)] a singular value characterization for the 2-norm of the smallest perturbation 
	so that the perturbed pencil has a specified number of its eigenvalues in a desired region 
	in the complex plane, and
	\item[\bf (ii)] a singular value characterization for the 2-norm of the smallest perturbation
	to a rectangular pencil so that it has a specified number of eigenvalues.
\end{enumerate}

Partly motivated by an application explained in the introduction, we allow perturbations to $A$ only.
The extension of our results to the case of simultaneously perturbed $A$ and $B$ remains open.
%;additional difficulties can be expected when $B$ is (close to) rank-deficient.

The development of efficient and reliable computational techniques for the solution of the derived 
singular value optimization problems is still in progress. As of now the optimization problems can
be solved numerically only for small pencils with small number of desired eigenvalues. The 
main task that needs to be addressed from a computational point of view is a reliable and 
efficient implementation of the DIRECT algorithm for Lipschitz-based optimization. For large
pencils it is necessary to develop Lipschitz-based algorithms converging asymptotically 
faster than the algorithms (such as the DIRECT algorithm) stemming from the Piyavskii-Shubert 
algorithm. The derivatives from Section~\ref{sec:2norm} might constitute a first step in this
direction.

\vskip 2ex

\noindent
\textbf{Acknowledgments}
We are grateful to two anonymous referees for their valuable comments. The research of 
the second author is supported in part by the European Commision grant
PIRG-GA-268355 and the T\"{U}B\.{I}TAK (the scientific and technological research 
council of Turkey) carrier grant 109T660.

\vskip 6ex

\appendix
\section{ Proof that $\sigma_{mr-r+1}  \left(  {\mathcal L}(\mu,\Gamma,A,B)  \right) \to 0$ as
$\Gamma\to\infty$} \label{sec:to_zero}

We prove that the $r$ smallest singular values of ${\mathcal L}(\mu,\Gamma,A,B)$
decay to zero as soon as at least one entry of $\Gamma$ tends to infinity, provided that $n = m$.
In the rectangular case, $n>m$, these singular values generally do not decay to zero.

We start by additionally assuming that
$A-\mu_i B$ are non--singular matrices for all $i=1,\ldots,r$.
We will first prove the result under this assumption, and then we will drop it.
Our approach is a generalization of the procedure from \cite[\S 5]{IN2005}, which in turn is a generalization 
of \cite[Lemma 2]{Malyshev1999}.

Under our assumptions the matrix ${\mathcal L}(\mu,\Gamma,A,B)$ is non--singular, and one can explicitly 
calculate the inverse. It is easy to see that the matrix ${\mathcal L}^{-1}(\mu,\Gamma,A,B)$ has the form 
\[\begin{bmatrix}
(A-\mu_1 B)^{-1} & 0 & \ldots & 0 \\
X_{21} & (A-\mu_2 B)^{-1} & \ldots & 0 \\
X_{31} & X_{32} & \ldots & 0 \\
\vdots & \vdots & \ddots & \vdots \\
X_{r1} & X_{r2} & \ldots & (A-\mu_r B)^{-1}
\end{bmatrix}.
\]
We will use the well--known relations
\begin{equation}
\label{eq:sigmar}
\sigma_{nr-r+1}  \left(  {\mathcal L}(\mu,\Gamma,A,B)  \right) = \sigma_{r}\left({\mathcal L}(\mu,\Gamma,A,B)^{-1} \right)^{-1} \le \sigma_{r}\left(X_{ij} \right)^{-1}. 	
\end{equation}
We first compute the matrices $X_{21},\ldots,X_{r,r-1}$ which lie on the first sub--diagonal. By a straightforward computation we obtain
\begin{equation*}
	X_{i+1,i} = -\gamma_{i+1,i}(A-\mu_{i+1}B)^{-1}B(A-\mu_i B)^{-1}.
\end{equation*}
If $\sigma_r \left( (A-\mu_{i+1}B)^{-1}B(A-\mu_i B)^{-1}\right) >0 $, then from \eqref{eq:sigmar} it follows that if any of $|\gamma_{i+1,i}|$ 
tends to infinity, we obtain the desired result. But $\sigma_r \left( (A-\mu_{i+1}B)^{-1}B(A-\mu_i B)^{-1}\right) > 0 $ easily 
follows from the assumption ${\rm rank} (B) \ge r$.

If this is not the case, meaning $\max_i\{\gamma_{i+1,i}\}$ is bounded, then 
we use the entries on the next sub--diagonal $X_{i+2,i}$. Again by straightforward computation we obtain
\[ X_{i+2,i} = -\gamma_{i+2,i}(A-\mu_{i+2}B)^{-1}B(A-\mu_i B)^{-1} + 
\gamma_{i+2,i+1}\gamma_{i+1,i}(A-\mu_{i+2}B)^{-1}B(A-\mu_{i+1}B)^{-1}B(A-\mu_i B)^{-1}. \]
Because again ${\rm rank} (B) \ge r$ implies  $\sigma_r \left( (A-\mu_{i+2}B)^{-1}B(A-\mu_i B)^{-1} \right) >0 $, it follows that if any of $|\gamma_{i+2,i}|$ tend to infinity, we obtain the desired result.
In general, we have the recursive formula
\[ X_{i+j,i} = -\gamma_{i+j,i}(A-\mu_{i+j}B)^{-1}B(A-\mu_i B)^{-1} - 
\sum_{k=1}^{j-1} \gamma_{i+j,i+k} (A-\mu_{i+j}B)^{-1}BX_{i+k,i}. \]
Applying the same procedure as above, we conclude the proof in this case.

% If (ii) or (iii) do not hold, let $\varepsilon>0$ be arbitrary, and let $B_{\varepsilon}$ be a non--singular matrix such that 
% $\|B-B_{\varepsilon}\|\le \varepsilon$ and $A-\mu_i B_{\varepsilon}$ are non--singular matrices.
% Then 
% \[  {\mathcal L}(\mu,\Gamma,A,B) 
%   =  {\mathcal L}(\mu,\Gamma,A,B_{\varepsilon}) + 
% {\mathcal L}(\mu,\Gamma,0,1) \otimes (B-B_{\varepsilon}).\footnote{Note that ${\mathcal L}(\mu,\Gamma,0,1)=-C(\mu,\Gamma)$.}\]
To remove the assumption that the matrices $A-\mu_i B$ are non--singular, we fix any $\varepsilon >0$. Let us choose a matrix $A_{\varepsilon}$ such that 
$\|A_{\varepsilon}-A\| < \varepsilon$ and that the matrices $A_{\varepsilon}-\mu_i B$ are non--singular for all $i=1,\ldots,r$. 
From the arguments above, if follows that there exists $\gamma_0>0$ such that 
$ \sigma_{nr-r+1}  \left(  {\mathcal L}(\mu,\Gamma,A_{\varepsilon},B)  \right) < \varepsilon$, when $\|\Gamma\|>\gamma_0$. Since 
\begin{center}
	$\sigma_{nr-r+1}  \left(  {\mathcal L}(\mu,\Gamma,A,B)  \right) \le \sigma_{nr-r+1}  \left(  {\mathcal L}(\mu,\Gamma,A_{\varepsilon},B)  \right) + \varepsilon$, 
\end{center}
we obtain the inequality $\sigma_{nr-r+1}  \left(  {\mathcal L}(\mu,\Gamma,A,B)  \right) < 2 \varepsilon $, when $\|\Gamma\|>\gamma_0$.

\bibliography{GEP2}
\end{document}